\title{Generating random graphs in biased Maker-Breaker games}
\author{
Asaf Ferber
\thanks{Institute of Theoretical Computer Science, ETH Zurich, 8092 Zurich,
Switzerland. Email: \texttt{asaf.ferber@inf.ethz.ch}.}
\and
Michael Krivelevich
\thanks{School of Mathematical Sciences,
Raymond and Beverly Sackler Faculty of Exact Sciences,
Tel Aviv University, Tel Aviv, 69978, Israel.
Email: \texttt{krivelev@post.tau.ac.il}.
Research supported in part by USA-Israel BSF Grant 2010115 and by
grant 912/12 from the Israel Science Foundation.}
\and
Humberto Naves
\thanks{Department of Mathematics, ETH, 8092 Zurich, Switzerland
and Department of Mathematics, UCLA, Los Angeles, CA 90095 USA.
Email: \texttt{hnaves@math.ucla.edu}.}}
\newif\ifnotesw
\newtheorem{theorem}{Theorem}[section]
\newtheorem{lemma}[theorem]{Lemma}
\newtheorem{claim}[theorem]{Claim}
\newtheorem{corollary}[theorem]{Corollary}
\newtheorem{definition}[theorem]{Definition}
\newtheorem{remark}[theorem]{Remark}
\newcommand{\secref}[1]{Section~\ref{#1}\xspace}
\newcommand{\thmref}[1]{Theorem~\ref{#1}\xspace}
\newcommand{\lemref}[1]{Lemma~\ref{#1}\xspace}
\newcommand{\corref}[1]{Corollary~\ref{#1}\xspace}
\newcommand{\defref}[1]{Definition~\ref{#1}\xspace}
\newcommand{\clref}[1]{Claim~\ref{#1}\xspace}
\newcommand{\Prob}{\mathbb{P}}
\newcommand{\gnp}{\ensuremath{\mathbb G}}
\newcommand{\dnp}{\ensuremath{\mathbb D}}
\newcommand{\Bin}{\textup{Bin}}
\newcommand{\eps}{\varepsilon}
\newcommand{\mc}[1]{\mathcal #1}
\newcommand{\aas}{a.a.s.\ }
\newcommand{\danger}{\mathsf {dang}}
\newcommand{\avdanan}{\overline{\danger}}
\begin{document}
\maketitle

\begin{abstract}

We present a general approach connecting biased Maker-Breaker games
and problems about local resilience in random graphs. We utilize
this approach to prove new results and also to derive some known
results about biased Maker-Breaker games. In particular, we show
that for $b=o\left(\sqrt{n}\right)$, Maker can build a pancyclic graph
(that is, a graph that contains cycles of every possible length) while
playing a $(1:b)$ game on $E(K_n)$. As another application, we show
that for $b=\Theta\left(n/\ln n\right)$, playing a $(1:b)$ game on
$E(K_n)$, Maker can build a graph which contains copies of all spanning
trees having maximum degree $\Delta=O(1)$ with a bare path of linear
length (a bare path in a tree $T$ is a path with all interior
vertices of degree exactly two in $T$).

\end{abstract}

\section{Introduction}

Let $X$ be a finite set and let $\mc F \subseteq 2^X$ be a family of
subsets. In the $(a:b)$ Maker-Breaker game $\mc F$, two players,
called Maker and Breaker, take turns in claiming previously
unclaimed elements of $X$. The set $X$ is called the \emph{board} of
the game and the members of $\mc F$ are referred to as the
\emph{winning sets}. Maker claims $a$ board elements per turn,
whereas Breaker claims $b$ elements. The parameters $a$ and $b$ are
called the \emph{bias} of Maker and of Breaker, respectively. Maker
wins the game as soon as he occupies all elements of some winning
set. If Maker does not fully occupy any winning set by the time
every board element is claimed by either of the players, then
Breaker wins the game. We say that the $(a:b)$ game $\mc F$ is
\emph{Maker's win} if Maker has a strategy that ensures his victory
against any strategy of Breaker, otherwise the game is
\emph{Breaker's win}. The most basic case is $a=b=1$, the so-called
\emph{unbiased} game, while for all other choices of $a$ and $b$ the
game is called a \emph{biased} game. Note that being the first
player is never a disadvantage in a Maker-Breaker game. Therefore,
in order to prove that Maker can win some Maker-Breaker game as the
first or the second player it is enough to prove that he can win
this game as a second player. In this paper we are concerned with
providing winning strategies for Maker and hence we will always
assume that Maker is the second player to move.

It is natural to play Maker-Breaker games on the edge set of a graph
$G=(V,E)$. In this case, $X=E$ and the winning sets are all the edge
sets of the edge-minimal subgraphs of $G$ which possess some given
monotone increasing graph property $\mc P$. We refer to this
game as the $(a:b)$ game $\mc P(G)$. In the \emph{connectivity
game} Maker wins if and only if his edges contain a spanning tree.
In the \emph{perfect matching} game the winning sets are all sets of
$\lfloor |V(G)|/2 \rfloor$ independent edges of $G$. Note that if
$|V(G)|$ is odd, then such a matching covers all vertices of $G$ but
one. In the \emph{Hamiltonicity game} the winning sets are all edge
sets of Hamilton cycles of $G$. Given a positive integer $k$, in the
\emph{$k$-connectivity game} the winning sets are all edge sets of
$k$-vertex-connected spanning subgraphs of $G$. Given a graph $H$,
in the \emph{$H$-game} played on $G$, the winning sets are all the
edge sets of copies of $H$ in $G$.

Playing unbiased Maker-Breaker games on the edge set of $K_n$ is
frequently in favor of Maker. For example, it is easy to see (and
also follows from \cite{Lehman}) that for every $n\ge 4$, Maker can
win the unbiased connectivity game in $n-1$ moves (which is clearly
also the fastest possible strategy). Other unbiased games played on
$E(K_n)$ like the perfect matching game, the Hamiltonicity game, the
$k$-vertex-connectivity game and the $T$-game where $T$ is a
spanning tree with bounded maximum degree, are also known to be easy
win for Maker (see e.g, \cite{CFGHL}, \cite{FH}, \cite{HKSS2009b}).
It is thus natural to give Breaker more power by allowing him to claim
$b>1$ elements in each turn.

Note that Maker-Breaker games are known to be \emph{bias monotone}.
That means that none of the players can be harmed by claiming more
elements. Therefore, it makes sense to study $(1:b)$ games and the
parameter $b^*$ which is the \emph{critical bias} of the game, that
is, $b^*$ is the maximal bias $b$ for which Maker wins the
corresponding $(1:b)$ game $\mc F$.

There is a striking relation between the theory of biased
Maker-Breaker games and the theory of random graphs, frequently
referred to as the \emph{Erd\H{o}s paradigm}. Roughly speaking, it
suggests that the critical bias for the game played by two ``clever
players'' and the appropriately defined critical bias for the game
played by two ``random players'' are asymptotically the same. In
this ``random players'' version of the game, both players use the
\emph{random strategy}, i.e., Maker claims one random unclaimed
element, while Breaker claims $b$ random unclaimed elements from the
board $E(K_n)$, per move. Note that the resulting graph occupied by
Maker at the end of the game is the random graph $\gnp(n,m)$, chosen
uniformly among all graphs with $n$ vertices and
$m=\lfloor\frac{1}{1+b}\binom{n}{2}\rfloor$ edges.
Therefore, if the winning sets consist of all the
edge sets of subgraphs of $K_n$ which possess
some monotone graph property $\mc P$, a natural guess for the
critical bias is $b^*$ for which $m^*=\frac{1}{1+b^*}\binom{n}{2}$ is the
threshold for the property that $\gnp(n,m)$ typically possesses
$\mc P$. For this reason, the Erd\H{os} paradigm is also known
as the \emph{random graph intuition}.

Chv\'atal and Erd\H{o}s were the first to indicate this phenomenon
in their seminal paper \cite{CE}. They showed that Breaker, playing
with bias $b=\frac{(1+\eps)n}{\ln n}$, can isolate a vertex
in Maker's graph while playing on the board $E(K_n)$. It thus
follows that Breaker wins every game for which the winning sets
consist of subgraphs of $K_n$ with positive minimum degree. What is
most surprising about their result is that at the end of the game,
Maker's graph consists of roughly $m=\frac 12 n\ln n$ edges which is
(asymptotically) the threshold for a random graph $\gnp(n,m)$ to stop
``having isolated vertices" (for more details on properties'
thresholds for random graphs, the reader is referred to
\cite{BolRand} and \cite{JLR}). In this spirit, the results of
Chv\'atal and Erd\H{o}s in \cite{CE} hint that $b^*=\frac{n}{\ln n}$
is actually the critical bias for many games whose target sets
consist of graphs having some property $\mc P$, for which the
threshold is $m^*=\frac{1}2 n\ln n$ (such as the
connectivity game, the perfect matching game and the Hamiltonicity
game). Gebauer and Szab\'o showed in \cite{GS} that the critical
bias for the connectivity game played on $E(K_n)$ is asymptotically
equal to $n/\ln n$. In a relevant development, Krivelevich proved in
\cite{Krive} that the critical bias for the Hamiltonicity game is
indeed $(1+o(1))n/\ln n$.

Another striking result exploring the relation between results in
Maker-Breaker games played on graphs and threshold probabilities for
properties of random graphs is due to Bednarska and \L uczak in
\cite{BL}. Given a graph $G$ on at least three vertices we define
\[
  m(G)=\max \left\{\frac{|E(H)|-1}{|V(H)|-2} : H\subseteq G
  \text{ and } |V(H)|\ge 3\right\}.
\]
Bednarska and \L uczak proved that the critical bias for the
$H$-game is of order $\Theta\left(n^{1/m(H)}\right)$.
The most surprising part in their proof is the side of Maker,
where they proved the following:

\begin{theorem}[Theorem 2 in \cite{BL}]\label{thm:BedLuc}
For every graph $H$ which contains a cycle there exists a constant
$c_0$ such that for every sufficiently large integer $n$ and $b\le
c_0n^{1/m(H)}$ Maker has a random strategy for the $(1:b)$ $H$-game
played on $E(K_n)$ that succeeds with probability $1-o(1)$ against
any strategy of Breaker.
\end{theorem}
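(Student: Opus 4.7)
The plan is for Maker to play a purely random strategy: in each of $N=\lfloor \epsilon n^2/b\rfloor$ rounds, Maker samples an edge of $K_n$ uniformly at random and claims it if the edge is still free (otherwise doing nothing). Here $\epsilon>0$ is a small absolute constant, to be chosen at the end in terms of $|E(H)|$. The crucial feature is that Maker's random choices are independent of Breaker's moves, so we can analyse the outcome by first computing probabilities as if Breaker were absent and then paying a small price for the copies of $H$ that Breaker actually blocks.

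Set $p:=2N/(n(n-1))=\Theta(1/b)$; the assumption $b\le c_0 n^{1/m(H)}$ gives $p\ge Cn^{-1/m(H)}$ for $C$ as large as we like, provided $c_0$ is chosen small enough. Let $X$ count the labeled copies of $H$ in Maker's final graph $M$. For any fixed labeled copy $H^{\ast}\subseteq K_n$, the probability that Maker's $N$ random samples cover every edge of $H^{\ast}$ is $(1+o(1))p^{|E(H)|}$. On the other hand, Breaker claims at most $bN=\epsilon n^2$ edges in total, so each individual edge of $H^{\ast}$ lies in Breaker's graph with probability at most $O(\epsilon)$, and a union bound gives $\Pr[H^{\ast}\subseteq M]\ge \tfrac{1}{2} p^{|E(H)|}$ once $\epsilon$ is small enough in terms of $|E(H)|$. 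Summing over the $\Theta(n^{|V(H)|})$ labeled copies of $H$, we get $\mathbb{E}[X]=\Omega(n^{|V(H)|} p^{|E(H)|})$, which tends to infinity by the very definition of $m(H)$: for every subgraph $H'\subseteq H$ with $|V(H')|\ge 3$ one checks that $n^{|V(H')|} p^{|E(H')|}\ge n^{2-1/m(H)}\to\infty$.

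The main obstacle is concentration: one must show $X>0$ with high probability, not merely in expectation. My plan is a second moment / Janson-style argument, bounding $\sum \mathbb{E}[\mathbf{1}_{H^{\ast}_1\subseteq M}\,\mathbf{1}_{H^{\ast}_2\subseteq M}]$ over ordered pairs of labeled copies, grouped by their common subgraph $H'=H^{\ast}_1\cap H^{\ast}_2$. This parallels the classical subgraph-threshold calculation for $\gnp(n,p)$: for each isomorphism type of $H'$ the contribution is comparable to $\mathbb{E}[X]^2/(n^{|V(H')|} p^{|E(H')|})$, and the definition of $m(H)$ ensures each denominator tends to infinity, so the total equals $(1+o(1))\mathbb{E}[X]^2$. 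Two features distinguish Maker's graph from pure $\gnp(n,p)$: (i) Maker's edges come from a uniform random multi-set of $N$ picks rather than independent Bernoulli draws---a standard approximation shows this changes the joint probabilities only by a $(1+o(1))$ factor on the scales of interest; and (ii) Breaker's adversarial choices can destroy some pairs of copies, but since Breaker has only $\epsilon n^2$ edges they can kill only an $O(\epsilon)$-fraction of each term, which is again absorbed into constants. Chebyshev's inequality then yields $X>0$ with probability $1-o(1)$.
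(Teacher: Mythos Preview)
This theorem is quoted from Bednarska--\L uczak~\cite{BL}; the present paper does not reprove it but does summarize the argument: Maker's random picks form (essentially) a random graph $R\sim\gnp(n,m)$, the number of \emph{failure} edges---picks that land on an edge already owned by Breaker---is \aas at most a small constant fraction of $|E(R)|$, and one then invokes a separate \emph{global resilience} statement: for $m$ in the right range, \aas every subgraph of $R$ with at least $(1-\delta)|E(R)|$ edges still contains a copy of $H$. Establishing that resilience statement is the main technical work in~\cite{BL}, and it is exactly where the parameter $m(H)$ enters.

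Your first-moment step contains a genuine error. You write that ``each individual edge of $H^\ast$ lies in Breaker's graph with probability at most $O(\eps)$'', but Breaker is adversarial: if Breaker's opening move is always to claim the edge $\{1,2\}$, then that edge lies in Breaker's graph with probability $1$, not $O(\eps)$. What is true is that Breaker's graph has at most $\eps n^2$ edges in total, a deterministic statement about its \emph{size}; this does not yield a uniform probability bound for any \emph{fixed} edge, and so the union bound giving $\Pr[H^\ast\subseteq M]\ge\tfrac12 p^{|E(H)|}$ is unsupported. For any copy $H^\ast$ using an edge Breaker takes in round~$1$, that probability is in fact zero.

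The same difficulty undermines your second-moment step. Your point~(ii) asserts that Breaker's $\eps n^2$ edges ``can kill only an $O(\eps)$-fraction of each term'', but Breaker's moves are adaptive to Maker's random choices, so the events $\{H^\ast_1\subseteq M\}$ and $\{H^\ast_2\subseteq M\}$ acquire correlations through Breaker that the standard $\gnp(n,p)$ variance calculation does not see; there is no reason the covariance structure should match that of $\gnp(n,p)$ uniformly over all Breaker strategies. The fix in~\cite{BL} is precisely to decouple the adversary from the counting: first show that, whatever Breaker does, at most $\delta m$ of Maker's sampled edges are failures, and then prove---as a statement about the random graph $R$ alone---that deleting \emph{any} $\delta m$ edges leaves a copy of $H$. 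The latter is a nontrivial resilience result (it is where one must control, for every $H'\subseteq H$, how many copies of $H$ can pass through a small edge set), not a consequence of a second-moment bound on $M$ itself.
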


Stating it intuitively, they proved that an ``optimal" strategy for
Maker is just to claim edges at random without caring about
Breaker's moves! Note that since a Maker-Breaker game is a
deterministic game, it follows that if Maker has a random strategy
that works with non-zero probability against any given strategy of
Breaker, then the game is Maker's win (otherwise Maker's strategy
should work with probability zero against Breaker's winning
strategy).

In the proof of \thmref{thm:BedLuc}, the graph obtained by
Maker at the end of the game is not exactly a random graph, since
some \emph{failure} edges might exist (that is, it might happen that
by choosing random edges, Maker attempts occasionally to pick an
edge $e$ which already belongs to Breaker). Thus, in order to prove
their result, Bednarska and \L uczak not only proved that random
graphs typically contain copies of the target graph $H$, but they
also showed that with a positive probability, even after removing a
small fraction of the total number of edges, these graphs still
contain many copies of $H$. This particular statement relates to the
\emph{resilience} of random graphs with respect to the property
``containing a copy of $H$''.

Given a monotone increasing graph property $\mc P$ and a graph
$G$ which satisfies $\mc P$, the \emph{resilience of $G$ with
respect to $\mc P$} measures how much one should change $G$ in
order to destroy $\mc P$ (here we assume that an edgeless graph
does not satisfy $\mc P$). There are two natural ways to define
it quantitatively. The first one is the following:
\begin{definition}\label{def:global}
For a monotone increasing graph property $\mc P$, the
\emph{global resilience} of $G$ with respect to $\mc P$ is the
minimum number $0\le r\le 1$ such that by deleting $r\cdot e(G)$
edges from $G$ one can obtain a graph $G'$ not having $\mc P$.

\end{definition}

Since one can destroy many natural properties by small changes (for
example, by isolating a vertex), it is natural to limit the number
of edges touching any vertex that one is allowed to delete. This
leads to the following definition of \emph{local resilience}.

\begin{definition} \label{def:local}
For a monotone increasing graph property $\mc P$, the
\emph{local resilience} of $G$ with respect to $\mc P$ is the
minimum number $0\le r\le 1$ such that by deleting at each vertex
$v$ at most $r\cdot d_G(v)$ edges one can obtain a graph not having
$\mc P$.
\end{definition}
Sudakov and Vu initiated the systematic study of resilience of
random and pseudorandom graphs in \cite{SV}. Since then, this field
has attracted substantial research interest (see, e.g.
\cite{BCS,BKS,BKS2,BKT,FK,KLS,LS}).

Going back to \thmref{thm:BedLuc}, Bednarska and \L uczak
actually proved that playing according to the random strategy, Maker
can typically build a graph $G\sim \gnp(n,m)$ minus some
$\eps$-fraction of its edges. They then showed that for a
given graph $H$ and an appropriate $m$, the global resilience of a
typical $G\sim \gnp(n,m)$ with respect to the property ``containing a
copy of $H$" is at least $\eps$. It is thus natural to seek
an alternative theorem which provides the analogous local resilience
argument.

The main result in this paper uses a sophisticated version of the
argument in \cite{BL}. Let $G$ be a graph and let $0<p<1$. The model
$\gnp(G,p)$ is a random subgraph $G'$ of $G$, obtained by
retaining each edge of $G$ in $G'$ independently at random with
probability $p$. For the special case where $G=K_n$, we denote
$\gnp(n,p):=\gnp(K_n,p)$, which is the well-known
Erd\H{o}s-R\'enyi model of random graphs.
Let $\mc P$ be a graph property, and consider sequences
of graphs $\{G_n\}_{n=1}^\infty$ (indexed by the number of vertices)
and probabilities $\{p(n)\}_{n=1}^\infty$. We say that $\gnp(G_n, p(n))
\in \mc P$ \emph{asymptotically almost surely},
or \aas for brevity, if the probability that $\gnp(G_n, p(n))\in\mc P$
tends to $1$ as $n$ goes to infinity. In this paper, we often abuse
notation and simply write $G=G_n$ and $p=p(n)$ to denote those
sequences. Before stating our main result we need the following
definition:
\begin{definition}
\label{PisRes}
Let $\mc P$ be a monotone increasing graph property, let
$G=G_n$ denote a family of graphs (where $G_n$ is a graph on $n$
vertices), let $0< p=p(n)< 1$ and let $0\le r\le 1$. We
say that $\mc P$ is $(G,p,r)$-\emph{resilient} if the local
resilience of a graph $G'\sim \gnp(G,p)$ with respect to
$\mc P$ is \aas at least $r$.
\end{definition}

Our main result is the following.

\begin{theorem} \label{thm:main}
For every constant $0<\eps\leq 1/100$ and a sufficiently large
integer $n$ the following holds. Suppose that
\begin{enumerate} [$(i)$]
\item $0< p=p(n)< 1$,
\item $G$ is a graph with $|V(G)|=n$,
\item $\delta(G)\ge \frac{10\ln n}{\eps p}$, and
\item $\mc P$ is a monotone increasing graph property which is
$(G,p,\eps)$-resilient.
\end{enumerate}
Then Maker has a winning strategy in the $(1:
\lfloor\frac{\eps}{20p}\rfloor)$ game $\mc P(G)$.
\end{theorem}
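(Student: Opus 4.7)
The plan is to follow and refine the Bednarska--\L uczak paradigm for Maker-Breaker games, now exploiting local (rather than global) resilience. Before the game, Maker privately samples i.i.d.\ uniform labels $U_e\in[0,1]$ for each $e\in E(G)$ and sets $R=\{e:U_e\le p\}$, so that $R\sim \gnp(G,p)$; by the $(G,p,\eps)$-resilience hypothesis, \aas one may delete up to $\eps\,d_R(v)$ edges at each vertex $v$ of $R$ and remain in $\mc P$. Maker's strategy at each of his turns is as follows: if some vertex $v$ satisfies $|B\cap R(v)|\ge \tfrac{\eps}{2}|R(v)|$ (call such $v$ \emph{dangerous}), Maker claims an unclaimed edge of $R$ incident to $v$; otherwise Maker claims the unclaimed edge of $R$ with the smallest label $U_e$ (equivalently, a uniformly random unclaimed $R$-edge); once $R$ has no unclaimed edges, Maker claims an arbitrary unclaimed edge. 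Since every Maker move that can do so claims an $R$-edge, every edge of $R$ ends up in either $M$ or $B$ at the end of the game, so automatically $M\supseteq R\setminus B$. Everything thus reduces to showing \aas that $|B\cap R(v)|\le \eps|R(v)|$ for every vertex $v$.

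The analysis combines three estimates. First, Chernoff applied to $|R(v)|\sim \Bin(d_G(v),p)$ combined with the hypothesis $\delta(G)\ge 10\ln n/(\eps p)$ and a union bound gives \aas $|R(v)|=(1\pm o(1))p\,d_G(v)\ge 9\ln n/\eps$ for all $v$. Second, $R$ is hidden from Breaker: Breaker only sees Maker's reveals, which are the $R$-edges in increasing order of $U_e$. Letting $u_t$ denote Maker's largest revealed label before Breaker's next pick, any unclaimed edge $e$ at that moment satisfies $U_e>u_t$, whence by the principle of deferred decisions $\Pr[e\in R\mid \text{history}]\le \tfrac{p-u_t}{1-u_t}\le p$. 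Consequently the number of Breaker's $v$-picks landing in $R$ is stochastically dominated by $\Bin(k,p)$ where $k=d_B(v)$, which by Chernoff concentrates around its mean of at most $p\cdot d_B(v)$; in particular, per round in which Breaker attacks $v$, his expected contribution to $|B\cap R(v)|$ is at most $pb\le \eps/20$. Third, and crucially, a racing argument: once $v$ is dangerous, each Maker defensive move adds one whole $R(v)$-edge to $|M\cap R(v)|$, while Breaker adds at most $pb\le\eps/20$ (in expectation) to $|B\cap R(v)|$ per round. Since $1\gg\eps/20$, Maker's defence exhausts $R(v)$ before $|B\cap R(v)|$ can grow from $\tfrac{\eps}{2}|R(v)|$ past $\eps|R(v)|$; short bookkeeping shows the final value is $\le \tfrac{11}{20}\eps|R(v)|<\eps|R(v)|$.

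Combining these, \aas one has $M\supseteq R\setminus B$ with $|B\cap R(v)|\le \eps|R(v)|\le \eps\, d_R(v)$ for every $v$; the $(G,p,\eps)$-resilience of $\mc P$ then gives $R\setminus B\in\mc P$, and monotonicity gives $M\in\mc P$. Since the described random strategy wins with probability $1-o(1)$ against any Breaker strategy, and Maker-Breaker games are deterministic, Maker has a (deterministic) winning strategy. I expect the main obstacle to be the rigorous implementation of the racing argument in the presence of multiple simultaneously dangerous vertices, Breaker's adaptive targeting, and the fact that Breaker's in-$R$ hits at $v$ are only stochastically controlled. In particular, one must ensure (a) that Maker can always execute a defensive move (i.e., that at a dangerous vertex there remains an unclaimed $R$-edge, which follows from the Chernoff lower bound on $|R(v)|$), and (b) that the concentration of $|B\cap R(v)|$ is strong enough to survive a union bound over all $n$ vertices and over all stopping times at which the ``dangerous'' threshold is crossed.
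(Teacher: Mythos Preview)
Your overall plan—Maker plays a randomized strategy that realizes $R\sim\gnp(G,p)$, then invoke $(G,p,\eps)$-resilience on $R\setminus B$—is the same framework as the paper's, and your Chernoff estimate on $|R(v)|$ is fine. But two of the three steps have genuine gaps, and the paper's proof handles both via a rather different implementation.

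First, the deferred-decisions claim is not justified. Your branching rule ``defend $v$ iff $|B\cap R(v)|\ge\tfrac{\eps}{2}|R(v)|$'' is itself a function of $R$: the moment Maker begins (or declines) to defend $v$, Breaker learns nontrivial information about which of his already-claimed $v$-edges lie in $R$ and about $|R(v)|$, so the inequality $\Pr[e\in R\mid\text{history}]\le p$ need not hold. Separately, the assertion ``any unclaimed $e$ satisfies $U_e>u_t$'' is only valid while Maker plays pure smallest-label moves; a defensive move may claim an $R$-edge of large label, after which smaller-label unclaimed $R$-edges can remain. The paper avoids all of this by generating the randomness \emph{online}: at each turn Maker first chooses an exposure vertex $v$ by a rule that depends only on the public game state (not on the random graph), and only then flips fresh coins on unexposed edges at $v$ until a success. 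Thus every unexposed edge has conditional probability exactly $p$ of lying in $G'$, and the number of type-II failures at $v$ is honestly dominated by $\Bin(d_B(v),p)$ at the relevant time.

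Second—and this is the missing idea—the racing argument collapses under multiple simultaneously dangerous vertices, which you flag but do not resolve. The bookkeeping ``$1\gg\eps/20$'' presupposes Maker devotes every round to the single dangerous $v$; Breaker can spread his $b$ moves to push many vertices over threshold at once. The paper's remedy is a simulated $MinBox(n,4\delta(G),p/2,2b)$ game: each vertex gets a box $F_v$, each Breaker edge $uv$ deposits one element into $F_u$ and $F_v$, and Maker's exposure vertex is the active box of maximal danger $w_B(F_v)-2b\,w_M(F_v)$. A potential argument (\thmref{thm:MinBoxGame}) then guarantees $\danger(F_v)\le 2b(\ln n+1)$ for every active box, which together with $\delta(G)\ge 10\ln n/(\eps p)$ forces $F_v$ inactive before $d_B(v)$ reaches $\tfrac{\eps}{4}d_G(v)$. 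Only with this scheduling in hand does the (now legitimate) stochastic domination $f_{II}(v)\preceq\Bin(\tfrac{\eps}{4}d_G(v),p)$ finish the proof. Your proposal lacks any analogue of this device, and without it the ``short bookkeeping'' cannot be made to work.
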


\thmref{thm:main} connects between Maker's side in biased
Maker-Breaker games on graphs and local resilience; it thus allows
to use (known) results about local resilience to give a lower
estimate for the critical bias in biased Maker-Breaker games. We now
present our concrete results for biased games, all of them are
applications of \thmref{thm:main} and corresponding local resilience
results for random graphs.

First, as a warm up we prove the following theorem which shows that
the critical bias for the Hamiltonicity game played on $E(K_n)$ is
$\Theta(\frac{n}{\ln n})$.

\begin{theorem}\label{thm:app1}
There exists a constant $\alpha>0$ for which for every sufficiently
large integer $n$ the following holds. Suppose that $b\le \alpha
n/\ln n$, then Maker has a winning strategy in the $(1:b)$
Hamiltonicity game played on $E(K_n)$.
\end{theorem}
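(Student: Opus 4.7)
The plan is to apply Theorem~\ref{thm:main} with $G = K_n$ and $\mc P$ the property of containing a Hamilton cycle. Since Theorem~\ref{thm:main} already reduces a lower bound on Maker's critical bias to a local resilience statement for random graphs, the task shrinks to choosing the parameters $\eps$ and $p$ so that all four hypotheses hold simultaneously while $\lfloor\eps/(20p)\rfloor \ge \alpha n / \ln n$.

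Fix $\eps = 1/100$ and set $p = p(n) = C \ln n / n$ for a sufficiently large absolute constant $C$ to be chosen. Hypothesis (i) is immediate, hypothesis (ii) is satisfied by $G = K_n$, and hypothesis (iii) requires only $\delta(K_n) = n - 1 \ge 10\ln n/(\eps p) = 1000\,n/C$, which holds for all large $n$ once $C \ge 1000$. Thus the first three conditions of Theorem~\ref{thm:main} reduce to a mechanical parameter check.

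The single nontrivial input is hypothesis (iv): that Hamiltonicity is $(K_n, p, \eps)$-resilient. This is exactly a local resilience statement for Hamiltonicity in $\gnp(n,p)$ at density $p = \Theta(\ln n / n)$, and it is a known ``Dirac-type'' theorem for random graphs due to Lee and Sudakov (building on earlier work of Sudakov and Vu): for $C$ sufficiently large, $\gnp(n,p)$ \aas has the property that every spanning subgraph of minimum degree at least $(1/2 + o(1))np$ contains a Hamilton cycle. Equivalently, the local resilience of $\gnp(n,p)$ with respect to Hamiltonicity is \aas at least $1/2 - o(1)$, which comfortably exceeds $\eps = 1/100$. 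I expect this to be essentially the only substantive step: importing the correct local resilience theorem for random Hamiltonicity at the logarithmic density; the rest is bookkeeping.

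Once hypothesis (iv) is in place, Theorem~\ref{thm:main} provides Maker with a winning strategy in the $(1:\lfloor \eps/(20p)\rfloor)$ Hamiltonicity game on $E(K_n)$. A final computation gives
\[
  \left\lfloor \frac{\eps}{20 p} \right\rfloor
  = \left\lfloor \frac{n}{2000\,C \ln n} \right\rfloor
  \ge \frac{\alpha n}{\ln n}
\]
for $\alpha := 1/(3000\,C)$ and all sufficiently large $n$, yielding the theorem.
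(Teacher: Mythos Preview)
Your proposal is correct and follows essentially the same approach as the paper: apply \thmref{thm:main} with $G=K_n$, $\eps=1/100$, $p=C\ln n/n$ for a large constant $C$, and verify hypothesis~(iv) by invoking the Lee--Sudakov local resilience theorem for Hamiltonicity (\thmref{thm:resHam} in the paper). The paper additionally spells out the passage from the minimum-degree formulation of \thmref{thm:resHam} to the $(K_n,p,\eps)$-resilience statement via a Chernoff bound on $\delta(\gnp(n,p))$, but this is precisely the translation you summarize in your third paragraph.
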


This result is presented here mainly for illustrative purposes, and
also due to the historical importance of the biased Hamiltonicity
game and the long road it took before having been resolved finally
in \cite{Krive}.

As a second application we show that by playing a $(1:b)$ game on
$E(K_n)$, if $b=o(\sqrt{n})$, then Maker wins the \emph{pancyclicity}
game. That is, Maker can build a graph which consists of cycles of any
given length $3\le \ell \le n$.

\begin{theorem} \label{thm:app2}
Let $b=o(\sqrt{n})$. Then in the $(1:b)$ game played on $E(K_n)$, Maker
has a winning strategy in the pancyclicity game.
\end{theorem}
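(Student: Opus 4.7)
The plan is to invoke \thmref{thm:main} with $G = K_n$, $\mc P$ the property of being pancyclic, and $\eps := 1/100$. Set $p := 1/(2000\, b)$, so that $\lfloor \eps/(20 p) \rfloor = b$ by construction. The hypothesis $b = o(\sqrt{n})$ translates to $p = \omega(n^{-1/2})$, and in particular $p \in (0,1)$ and $np = \omega(\sqrt{n})$ for large $n$.

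The minimum-degree hypothesis of \thmref{thm:main} amounts to $n - 1 \geq 10 \ln n / (\eps p) = 2 \cdot 10^{6}\, b \ln n$, which holds since $b = o(\sqrt n)$ gives $b \ln n = o(n)$. What remains is to verify that pancyclicity is $(K_n, p, \eps)$-resilient.

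For this last step I would invoke the theorem of Krivelevich, Lee and Sudakov on resilient pancyclicity of random graphs, which states that for $p \gg n^{-1/2}$, a.a.s.\ $\gnp(n,p)$ has the property that every spanning subgraph with minimum degree at least $\bigl(\tfrac{1}{2} + o(1)\bigr) np$ contains a cycle of every length from $3$ to $n$. Combined with the standard Chernoff plus union bound observation that a.a.s.\ every vertex of $\gnp(n,p)$ has degree $(1 + o(1))np$ (valid since $np = \omega(\log n)$), deleting at most $\eps\, d_{G'}(v) = d_{G'}(v)/100$ edges at each vertex still leaves minimum degree at least $\bigl(0.99 + o(1)\bigr) np$, far above the threshold $\bigl(\tfrac{1}{2} + o(1)\bigr) np$. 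Hence the cited result yields pancyclicity of the residual graph and confirms $(K_n, p, \eps)$-resilience.

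The principal obstacle is providing the local resilience statement for pancyclicity: one must cover all cycle lengths simultaneously, including the short ones (triangles, $C_4$, etc.) that are the most vulnerable to adversarial local deletions and whose counts depend most delicately on $p$. This is exactly what the Krivelevich--Lee--Sudakov theorem handles — by combining counting/second-moment arguments for short cycles with a rotation-extension argument for long cycles — and once it is quoted, \thmref{thm:main} delivers Maker's winning strategy in the $(1:b)$ pancyclicity game immediately.
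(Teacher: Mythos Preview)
Your proposal is correct and follows essentially the same approach as the paper: both apply \thmref{thm:main} with $G=K_n$, $\eps=1/100$, and $p=\omega(n^{-1/2})$, invoking the Krivelevich--Lee--Sudakov result (\thmref{thm:resPan}) to obtain the required $(K_n,p,\eps)$-resilience of pancyclicity. Your version is slightly more explicit in fixing $p=1/(2000b)$ and verifying the minimum-degree hypothesis, but the argument is the same.
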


Note that the result is asymptotical tight possible in the sense that for
$b\ge 2\sqrt{n}$, Chv\'atal and Erd\H{o}s showed in \cite{CE} that Maker
cannot even build a triangle.
%

Ferber, Hefetz, and Krivelevich showed in \cite{FHK} that if $T$ is
a tree on $n$ vertices and $\Delta(T)\le n^{0.05}$ then the
following holds. In the $(1:b)$ game, Maker has a strategy to win
the $T$-game in $n+o(n)$ moves, for every $b \le n^{0.005}$. They
also asked for improvements of the parameter $b$ (regardless of the
number of moves required for Maker to win). In this paper, as a
third application of our main result, we show how to obtain such an
improvement for a large family of trees. Those are trees $T$ with
$\Delta(T)=O(1)$ containing a \emph{bare path} of length
$\Theta(n)$, where a bare path is a path for which all the interior
vertices are of degree exactly two in $T$. In fact we prove the
following much stronger result:

\begin{theorem}\label{thm:appTrees}
For every $\alpha>0$ and $\Delta>0$ there exists a
$\delta:=\delta(\alpha,\Delta)>0$ such that for every sufficiently large
integer $n$ the following holds. For $b\le \frac{\delta n}{\log
n}$, in the $(1:b)$ Maker-Breaker game played on $E(K_n)$, Maker has
a strategy to build a graph which contains copies of all the
spanning trees $T$ such that:
\begin{enumerate} [(i)]
\item $\Delta(T)\le \Delta$, and
\item $T$ has a bare path of length at least $\alpha n$.
\end{enumerate}
\end{theorem}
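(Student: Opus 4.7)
\textbf{Proof plan for \thmref{thm:appTrees}.}

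My approach is to reduce the game to a local resilience question and invoke \thmref{thm:main}. Fix $\eps \in (0, 1/100]$ small enough in terms of $\alpha$ and $\Delta$, set $p := p(n) = C \log n / n$ with a large constant $C = C(\alpha, \Delta, \eps)$, and choose $\delta := \eps/(20C)$. Then $\lfloor \eps/(20p) \rfloor \ge \delta n / \log n$, and $\delta(K_n) = n-1 \ge 10 \ln n / (\eps p)$ as soon as $C \ge 11/\eps$. Let $\mc T_{\alpha, \Delta}$ denote the family of trees $T$ on $n$ vertices with $\Delta(T) \le \Delta$ and some bare path of length at least $\alpha n$, and let $\mc P$ be the monotone increasing property that a graph on $n$ vertices contains a copy of every member of $\mc T_{\alpha, \Delta}$ simultaneously. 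By \thmref{thm:main}, the theorem reduces to showing that $\mc P$ is $(K_n, p, \eps)$-resilient.

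The required statement asserts that a.a.s.\ every subgraph $G' \subseteq G \sim \gnp(n, p)$ with $d_{G'}(v) \ge (1-\eps) d_G(v)$ for all $v$ is universal for $\mc T_{\alpha, \Delta}$. Since a.a.s.\ $d_G(v) = (1+o(1))np$ uniformly in $v$, $G'$ has minimum degree at least $(1 - 2\eps)np$ and inherits the strong expansion and quasirandom properties of $G$. To embed a given $T \in \mc T_{\alpha, \Delta}$ into $G'$, I would delete a long interior segment of length $\alpha n - o(n)$ from the bare path of $T$, producing a forest with two components $T_1, T_2$ rooted at the cut endpoints $u_1, u_2$ and of combined size at most $(1 - \alpha + o(1))n$. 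Embed $T_1$ and $T_2$ into $G'$ via a standard tree-embedding scheme (partial random embedding followed by completion) using the bounded degree of $T$ and the small-set expansion of $G'$, while reserving in advance a small random ``absorber'' set $R \subseteq V(G')$. Finally, inside the unused vertex set (including $R$), find a path of the exact required length between the images of $u_1$ and $u_2$ that covers every remaining vertex, thereby closing up $T$.

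The main technical obstacle is this last step. Because $p = \Theta(\log n / n)$ lies at the very threshold of connectivity, a single unlucky low-degree vertex in the leftover set can derail the embedding, and matching the prescribed path length between prescribed endpoints is delicate. The standard remedy is to design the absorber $R$ at the outset so that any small collection of vertices can be inserted into any Hamilton-type path through $R$, converting the exact-length demand into a softer path-connectivity problem on the expanding graph $G'$, which can then be resolved by P\'osa-type rotation-extension arguments. This is the same circle of ideas that underpins the resilient Hamiltonicity arguments used to prove \thmref{thm:app1} and \thmref{thm:app2}; here one must adapt them to the universality setting for bounded-degree trees with linear bare paths, and this adaptation (or a direct appeal to a known local-resilience-of-random-graphs theorem for this class of trees at $p = \Theta(\log n/n)$) is the technical heart of the proof.
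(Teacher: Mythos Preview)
Your plan is correct and matches the paper's approach: the paper reduces to a local-resilience statement for $\gnp(n,p)$ at $p=\Theta(\ln n/n)$ (stated separately as \thmref{thm:resTrees}) and then applies \thmref{thm:main} with essentially the same parameter choices you make. For the resilience step the paper does almost exactly what you sketch --- it sets aside a random reserve set $V_0$ (playing the role of your absorber) via a two-round exposure $G=G_1\cup G_2$, embeds the tree minus the interior of the bare path using the Balogh--Csaba--Samotij almost-spanning-tree theorem (\thmref{thm:resAlmostTrees}), and then finds the Hamilton path through the leftover vertices between the two prescribed endpoints by a P\'osa-type rotation--extension argument adapted to preserve a fixed edge (their ``$e$-boosters'', \lemref{lem:boosters}).
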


\begin{remark}
Note that the bias $b$ in \thmref{thm:appTrees} is
tight (up to a constant factor), as Chv\'atal and Erd\H{o}s
showed \cite{CE} that for $b=\lfloor\frac{(1+\eps)n}{\ln n}\rfloor$
Breaker can isolate a vertex in Maker's graph.
\end{remark}

The rest of the paper is organized as follows. In \secref{section:aux}
we present some auxiliary results. In \secref{section:main} we prove
\thmref{thm:main}, and in \secref{section:applications} we show how to apply
\thmref{thm:main} combined with local resilience statements
(introduced in Subsection~\ref{sec:resilience}) to various games.

\subsection{Notation}
A graph $G$ is given by a pair of its (finite) vertex set
$V(G)$ and edge set $E(G)$. For a subset $X$ of vertices, we use
$E(X)$ to denote the set of edges spanned by $X$, and for
two disjoint sets $X,Y$, we use $E(X,Y)$ to denote the number
of edges with one endpoint in $X$ and the other in $Y$.
Let $G[X]$ denote the subgraph of $G$ induced by a subset of
vertices $X$. We write $N(X)$ to denote the collection of vertices
that have at least one neighbor in $X$.
When $X$ consists of a single vertex, we abbreviate $N(v)$
for $N(\{v\})$, and let $d(v)$ denote the cardinality of $N(v)$,
i.e., the degree of $v$. Moreover, if $X$ is a set of vertices,
we let $G\setminus X$ to be the induced subgraph $G[V(G)\setminus X]$.
When there are several graphs under consideration, we use subscripts
such as $N_G(X)$ indicating the relevant graph of interest.

To simplify the presentation, we often omit
floor and ceiling signs whenever these are not crucial and make no
attempts to optimize the absolute constants involved. We also assume
that the parameter $n$ (which always denotes the number of vertices
of the host graph) tends to infinity and therefore is sufficiently
large whenever necessary. All our asymptotic notation symbols
($O$, $o$, $\Omega$, $\omega$, $\Theta$) are relative to this
variable $n$.

\section{Auxiliary results}
\label{section:aux}
In this section we present some auxiliary results that will be used
throughout the paper.

\subsection{Binomial distribution bounds}
We use extensively the following well-known bound on the lower and
the upper tails of the Binomial distribution due to Chernoff (see,
e.g., \cite{AloSpe2008}).

\begin{lemma}\label{Che}
If $X \sim \Bin(n,p)$, then
\begin{itemize}
  \item $\Prob\left[X<(1-a)np\right]<\exp\left(-\frac{a^2np}{2}\right)$
        for every $a>0.$
  \item $\Prob\left[X>(1+a)np\right]<\exp\left(-\frac{a^2np}{3}\right)$
        for every $0 < a \leq 1.$
\end{itemize}
\end{lemma}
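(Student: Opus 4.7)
The plan is to prove both tail bounds by Chernoff's exponential moment method, that is, by applying Markov's inequality to $e^{tX}$ (or $e^{-tX}$) with a carefully chosen parameter $t>0$, and then reducing everything to two elementary scalar inequalities about the logarithm.

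First I would set up the moment generating function. Writing $X=\sum_{i=1}^n X_i$ where the $X_i$ are i.i.d.\ Bernoulli$(p)$, independence yields
\[
\mathbb{E}[e^{tX}] \;=\; \bigl(1-p+pe^t\bigr)^n \;\le\; \exp\!\bigl(np(e^t-1)\bigr),
\]
where the last step uses the standard inequality $1+x\le e^x$ applied to $x=p(e^t-1)$. This single bound will drive both tails.

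For the upper tail, given $0<a\le 1$, I would apply Markov's inequality to $e^{tX}$ with $t=\ln(1+a)>0$:
\[
\Prob[X>(1+a)np] \;\le\; \frac{\mathbb{E}[e^{tX}]}{e^{t(1+a)np}} \;\le\; \exp\!\bigl(np\bigl[(e^t-1) - t(1+a)\bigr]\bigr) \;=\; \exp\!\bigl(-np\bigl[(1+a)\ln(1+a)-a\bigr]\bigr).
\]
It then suffices to show the scalar inequality $(1+a)\ln(1+a)-a\ge a^2/3$ for $0<a\le 1$. I would verify this by setting $g(a):=(1+a)\ln(1+a)-a-a^2/3$, checking $g(0)=g'(0)=0$, and showing $g'(a)=\ln(1+a)-2a/3\ge 0$ on $[0,1]$ (e.g., by noting $g''(a)=1/(1+a)-2/3$ is positive on $[0,1/2]$ and that $g'(1)=\ln 2-2/3>0$, so $g'\ge 0$ throughout).

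For the lower tail, I would apply Markov's inequality to $e^{-tX}$ with $t=-\ln(1-a)>0$ (allowed for any $a>0$, using the convention $t=+\infty$ when $a\ge 1$, in which case the probability is vacuously at most $1\le e^{-a^2 np/2}$ after a small check; the main case is $0<a<1$):
\[
\Prob[X<(1-a)np] \;\le\; \frac{\mathbb{E}[e^{-tX}]}{e^{-t(1-a)np}} \;\le\; \exp\!\bigl(np\bigl[(e^{-t}-1)+t(1-a)\bigr]\bigr) \;=\; \exp\!\bigl(-np\bigl[a+(1-a)\ln(1-a)\bigr]\bigr).
\]
The remaining obstacle is the scalar inequality $a+(1-a)\ln(1-a)\ge a^2/2$ for $a\in(0,1)$. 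Setting $f(a):=(1-a)\ln(1-a)+a-a^2/2$ with $f(0)=0$, one computes $f'(a)=-\ln(1-a)-a$ and $f''(a)=a/(1-a)\ge 0$, so $f'\ge f'(0)=0$ and hence $f\ge 0$, as needed. These two elementary log-inequalities are the only real work; everything else is bookkeeping around Markov's inequality and the MGF bound.
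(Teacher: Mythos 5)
Your proof is correct. The paper does not actually prove this lemma --- it is stated as a known bound with a citation to Alon--Spencer --- and your exponential-moment (Markov applied to $e^{\pm tX}$) argument, reducing the two tails to the scalar inequalities $(1+a)\ln(1+a)-a\ge a^2/3$ for $0<a\le 1$ and $a+(1-a)\ln(1-a)\ge a^2/2$ for $0<a<1$, is exactly the standard proof that the citation points to, and both calculus verifications check out. Two tiny remarks: for $a\ge 1$ in the lower tail your parenthetical ``at most $1\le e^{-a^2np/2}$'' is backwards as written --- the correct trivial observation is that $(1-a)np\le 0$, so $\Prob\left[X<(1-a)np\right]=0$, which is certainly below the bound; and since your two scalar inequalities are in fact strict for $a>0$, the strict inequalities asserted in the lemma do follow from your non-strict Markov steps.
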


\noindent The following is a trivial yet useful bound.
\begin{lemma}\label{l:Che}
Let $X \sim \Bin(n,p)$ and $k \in \mathbb{N}$. Then
\[
  \Prob[X\ge k] \le \left(\frac{enp}{k}\right)^k.
\]
\end{lemma}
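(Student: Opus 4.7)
My plan is a simple union bound. Writing $X=X_1+\cdots+X_n$ as a sum of independent Bernoulli$(p)$ indicators, the event $\{X\ge k\}$ says that at least $k$ of the $X_i$ equal $1$, so it is contained in the union, over all $k$-subsets $S\subseteq\{1,\dots,n\}$, of the events $\{X_i=1 \text{ for every } i\in S\}$. By independence each such event has probability $p^k$, so the union bound yields $\Prob[X\ge k]\le \binom{n}{k}p^k$. Combining this with the standard inequality $\binom{n}{k}\le (en/k)^k$ (which follows from $\binom{n}{k}\le n^k/k!$ and $k!\ge (k/e)^k$) gives exactly $(enp/k)^k$, as claimed.

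An equivalent route is to apply Markov's inequality to the nonnegative integer-valued random variable $\binom{X}{k}$: since $\{X\ge k\}=\{\binom{X}{k}\ge 1\}$ and $\mathbb{E}\binom{X}{k}=\binom{n}{k}p^k$, Markov produces the same intermediate bound, and the factorial estimate finishes the argument identically. There is no substantive obstacle in this proof: the inequality is one of the most routine tail estimates for the binomial distribution, and the whole derivation fits in two lines. It is stated in the paper only as a convenient lemma for later reference, not as a step requiring any genuine work.
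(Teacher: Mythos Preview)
Your proof is correct and is exactly the paper's argument: the paper writes simply $\Prob[X\ge k]\le \binom{n}{k}p^k \le (enp/k)^k$, which is precisely your union bound followed by the standard estimate $\binom{n}{k}\le (en/k)^k$. Your alternative Markov-on-$\binom{X}{k}$ phrasing is an equivalent way to reach the same intermediate inequality.
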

\begin{proof}
$\Prob[X \ge k] \le \binom{n}{k}p^k \le
\left(\frac{enp}{k}\right)^k$.
\end{proof}

\subsection{The MinBox game}

Consider the following variant of the classical \emph{Box Game}
introduced by Chv\'atal and Erd\H{o}s in \cite{CE}, which we refer
to as the \emph{MinBox game}. The game $MinBox(n,D,\alpha,b)$ is a
$(1:b)$ Maker-Breaker game played on a family of $n$ disjoint sets
(\emph{boxes}), each having size at least $D$. Maker's goal is to
claim at least $\alpha |F|$ elements from each box $F$. In the proof
of our main result, we make use of a specific strategy $S$ for Maker
in the MinBox game. This strategy not only ensures his victory, but
also allows Maker to maintain a reasonable proportion of elements in
all boxes throughout the game.

Before describing the strategy, we need to introduce some notation.
Assume that a MinBox game is in progress, let $w_M(F)$ and $w_B(F)$
denote the number of Maker's and Breaker's current elements in box
$F$, respectively. Furthermore, let $\danger(F):=w_B(F)-b\cdot
w_M(F)$ be the \emph{danger value} of $F$. Finally, we say that a
box $F$ is \emph{free} if it contains an element not yet claimed by
either player, and it is \emph{active} if $w_M(F) < \alpha |F|$.
Maker's strategy is as follows:

\textbf{Strategy $S$:} In any move of the game, Maker identifies one
free active box having maximal danger value (breaking ties
arbitrarily), and claims one arbitrary free element from it.

We are ready to state the following theorem.


\begin{theorem} \label{thm:MinBoxGame}
Let $n$, $b$, and $D$ be positive integers, and $0<\alpha<1$. Assume
that Maker plays the game $MinBox(n,D,\alpha,b)$ according to the
strategy $S$ described above. Then he ensures that, throughout the
game, every active box $F$ satisfies
\[
  \danger(F)\le b(\ln n+1).
\]
In particular, if $\alpha < \frac{1}{1+b}$ and $D\ge\frac{b(\ln n+1)}{%
1-\alpha(b+1)}$, then $S$ is a winning strategy for Maker in this
game.
\end{theorem}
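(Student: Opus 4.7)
I would prove the theorem in two steps: first establish the quantitative danger bound, then deduce that strategy $S$ wins the game under the hypotheses of the ``in particular'' clause.

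For the quantitative bound, my approach is to argue by induction on the number of rounds played that, throughout the game, the sorted danger sequence $d_1 \ge d_2 \ge \cdots \ge d_m$ of the free active boxes satisfies a rank-dependent bound, in the spirit of the classical Chv\'atal--Erd\H os Box Game analysis. The key observation is the interplay between the two players' moves: in a single round, Breaker's $b$ moves raise some dangers by a total of $b$ (distributed across at most $b$ boxes), while Maker's move---played in the current maximum-danger free active box---lowers exactly one danger by $b$. The greedy choice under strategy $S$ forces Maker's decrement to be applied to the largest entry of the sorted sequence. A suitable invariant, asymptotically of the form $d_i \le b\bigl(\ln(n/i) + 1\bigr)$, then yields the desired statement $d_1 \le b(\ln n + 1)$ upon specialization to the top rank. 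Establishing that this invariant is preserved across a round is the technical heart of the proof: one must track the reindexing of the sorted sequence caused by both Breaker's distributive additions and Maker's greedy subtraction, and verify that Maker's top-attacks amortize correctly against Breaker's worst case of creating several ``runner-up'' boxes simultaneously.

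Granted the quantitative bound, the ``in particular'' conclusion follows from a direct calculation. Suppose $\alpha < 1/(1+b)$ and $D \ge b(\ln n + 1)/\bigl(1 - \alpha(b+1)\bigr)$, and assume for contradiction that some box $F$ ceases to be free while still being active, i.e., $w_M(F) + w_B(F) = |F|$ and $w_M(F) < \alpha|F|$. Then $w_B(F) > (1 - \alpha)|F|$, hence
\[
\danger(F) = w_B(F) - b\,w_M(F) > (1-\alpha)|F| - b\alpha|F| = |F|\bigl(1 - \alpha(b+1)\bigr) \ge D\bigl(1 - \alpha(b+1)\bigr) \ge b(\ln n + 1),
\]
contradicting the quantitative bound. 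Therefore every active box eventually becomes inactive---that is, Maker claims at least $\alpha|F|$ elements in each box---and so strategy $S$ is winning.

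The main obstacle is the inductive step in the quantitative bound: a single-round analysis of a per-rank invariant fails because Breaker can raise the $i$-th largest danger by $b$ while Maker can only decrement the single top entry. Resolving this requires the harmonic-type rank-dependent bound, which effectively amortizes Maker's top-attacks over multiple rounds against Breaker's worst-case distributive behavior, and is the classical contribution of the box-game-style analysis.
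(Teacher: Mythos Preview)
Your deduction of the ``in particular'' clause is correct and essentially identical to the paper's: assume an active box becomes non-free, compute $\danger(F)=|F|-(b+1)w_M(F)>(1-\alpha(b+1))|F|\ge b(\ln n+1)$, contradiction.

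For the quantitative danger bound your approach genuinely differs. You propose a \emph{forward} induction on rounds, maintaining a rank-dependent invariant $d_i\le b(\ln(n/i)+1)$ on the sorted danger sequence. The paper instead runs a \emph{backward} argument from a hypothetical first violation, in the style of Gebauer--Szab\'o: if at round $g$ some active box $F_g$ first has $\danger(F_g)>b(\ln n+1)$, one considers the nested sets $I_i=\{F_{g-i},\ldots,F_g\}$ of boxes Maker played in and tracks only the \emph{average} danger $\overline{\danger}(I_i)$. Two one-line lemmas show that over a full round this average never decreases when $I_i=I_{i-1}$ and drops by at most $b/|I_i|$ when a new box enters; telescoping gives $\overline{\danger}_{B_1}(I_{g-1})>b\ln n-b\sum_{j=2}^{r}1/j>0$, contradicting that all dangers start at zero.

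The backward average method is cleaner precisely because it sidesteps what you flag as the ``main obstacle'': one controls a single scalar over a set determined by Maker's greedy play, rather than an entire sorted vector subject to reindexing after both players move. Your per-rank invariant would, if established, give a nominally stronger conclusion, but you do not actually carry out the inductive step---you describe what must be verified and then appeal to it being ``the classical contribution of the box-game-style analysis.'' That appeal is too quick: the Chv\'atal--Erd\H os and Gebauer--Szab\'o arguments do not proceed via a per-rank sorted invariant of the shape you write down, and the reindexing when Maker's single $-b$ on the top entry meets Breaker's distributed $+1$'s is exactly the delicate point. As written, the quantitative half of your proposal is a plausible plan but not yet a proof.
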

The proof of this result can be found in the Appendix. We remark
that it is very similar to the proof of Theorem 1.2 in \cite{GS}.

\subsection{Local resilience}
\label{sec:resilience}

In this subsection we describe several results related to local
resilience of monotone graph properties. The main result of this
paper (\thmref{thm:main}) shows a connection between local
resilience of graphs and Maker-Breaker games, therefore, in order to
be able to apply it, we first need to present some results related
to local resilience of various properties of random graphs.

The first statement of this section is a theorem from \cite{LS}
providing a good bound on the local resilience of a random graph
with respect to the property ``being Hamiltonian''. This result will
be used in the proof of \thmref{thm:app1} for the Hamiltonicity
game. We remark that for our purposes, prior (and weaker) results on
the local resilience of a random graph with respect to Hamiltonicity
(for example those in \cite{FK}) would suffice.

\begin{theorem}[Theorem 1.1, \cite{LS}] \label{thm:resHam}
For every positive $\eps>0$, there exists a constant
$C=C(\eps)$ such that for $p\ge \frac{C\ln n}{n}$, a graph
$G\sim \gnp(n,p)$ is \aas such that the following holds. Suppose that
$H$ is a subgraph of $G$ for which $G'=G\setminus H$ has minimum
degree at least $(1/2+\eps)np$, then $G'$ is Hamiltonian.
\end{theorem}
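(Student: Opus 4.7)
The plan is to apply Pósa's rotation-extension technique to $G' = G \setminus H$. The two ingredients needed are: (i) factor-$2$ expansion of small sets, i.e., $|N_{G'}(U) \setminus U| \ge 2|U|$ for every $U$ with $|U| \le n/K$ (for some $K = K(\eps)$), and (ii) a closing mechanism that turns a Hamilton path in $G'$ into a Hamilton cycle.

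For (i), I would first establish standard concentration properties of $G \sim \gnp(n,p)$, via \lemref{Che} and union bounds: for $C = C(\eps)$ sufficiently large, \aas (a) every vertex of $G$ has degree $(1\pm\eps/10)np$, and (b) for every vertex $v$ and every vertex set $U$ of size $u$, $|N_G(v) \cap U| \le \max(10\ln n,\, 2up)$. Combined with the hypothesis $\delta(G') \ge (1/2+\eps)np$, property (a) forces $\Delta(H) \le (\tfrac12 - \eps/2)np$, so $G'$ has minimum degree at least $(\tfrac12 + \eps/2)np$. For any $U$ with $|U| \le n/K$, the number of $G'$-edges leaving $U$ is at least $(\eps/2)np\,|U|$, while by (b) each external vertex can absorb at most $\max(10\ln n,\,2|U|p)$ of them, so
\[
|N_{G'}(U) \setminus U| \ \ge\ \frac{(\eps/2)\,np\,|U|}{\max(10\ln n,\,2|U|p)};
\]
a short case analysis (whether $|U|p \gtrless 5\ln n$) yields the factor-$2$ expansion when $K$ and $C$ are chosen large enough relative to $\eps$.

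With small-set expansion in hand, Pósa's rotation lemma immediately implies that any longest path in $G'$ is a Hamilton path: otherwise the set of endpoints reachable by rotations from a longest path would have an external neighbor in $G'$, contradicting maximality. For the closing step, iterated rotations at both endpoints of a Hamilton path produce a family $\mathcal{B}$ of $\Omega(n^2)$ \emph{booster} pairs, each of which would close the cycle if it were an edge of $G'$. To find such an edge, I would use a sprinkling setup: write $p = p_1 + p_2$ with $p_2 = \Theta(\eps p)$, couple $G = G_1 \cup G_2$ with independent $G_1 \sim \gnp(n, p_1)$ and $G_2 \sim \gnp(n, p_2)$, and perform the expansion and rotation argument using $G_1$ alone (noting that the minimum-degree condition on $G \setminus H$ descends to $G_1 \setminus H$ after a small adjustment of constants). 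The edges of $G_2$, independent of $G_1$, then \aas hit $\mathcal{B}$ at a pair not in $H$, yielding the Hamilton cycle.

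The main obstacle is the small-set expansion in the regime $|U| \le O(\log n / p)$: here the minimum-degree bound alone is insufficient, and one genuinely needs the codegree bound (b)---which in turn forces the logarithmic lower bound on $p$ assumed in the theorem. The closing step is also subtle because the family $\mathcal{B}$ depends on the adversarial $H$, but sprinkling neatly separates the randomness used to construct the Hamilton path from that used to close it, circumventing this dependence.
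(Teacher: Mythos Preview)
The paper does not prove this statement: it is quoted verbatim as Theorem~1.1 of Lee--Sudakov \cite{LS} and used as a black box in the proof of \thmref{thm:app1}. There is therefore no proof in the paper to compare your proposal against.

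That said, your sketch has a genuine gap in the closing step. Sprinkling does \emph{not} circumvent the dependence on $H$: the adversary chooses $H$ after seeing all of $G=G_1\cup G_2$, so $H$ (and hence the booster family $\mathcal{B}$, which depends on $G_1\setminus H$) may depend on $G_2$. You cannot treat the edges of $G_2$ as fresh randomness independent of $\mathcal{B}$. In resilience problems one must instead argue, conditioned on typical properties of $G$ alone, that \emph{every} admissible $G'=G\setminus H$ already contains a booster edge; this typically requires pushing the rotation argument further (getting endpoint sets of size strictly larger than $n/2$, or a more structural argument) so that the min-degree hypothesis $\delta(G')\ge(1/2+\eps)np$ deterministically forces an edge back into the endpoint set. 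The actual proof in \cite{LS} is considerably more delicate than a direct P\'osa-plus-sprinkling argument for exactly this reason. Your expansion step (i) is fine, but the statement that ``P\'osa's rotation lemma immediately implies that any longest path in $G'$ is a Hamilton path'' also glosses over the need for an iterative extend-and-rotate argument using connectivity of $G'$.
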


The following result from \cite{KLS} is related to the local resilience
of a typical $G\sim \gnp(n,p)$ with respect to pancyclicity.

\begin{theorem}[Thereom 1.1, \cite{KLS}] \label{thm:resPan}
If $p=\omega(n^{-1/2})$, then the local resilience of $G\sim \gnp(n,p)$ with
respect to the property ``being pancyclic" is \aas $1/2+o(1)$.
\end{theorem}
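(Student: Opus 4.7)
\textbf{Upper bound: $\mathrm{res}\le 1/2+o(1)$.} Take a uniformly random partition $V(G)=A\cup B$. Since a.a.s.\ $d_G(v)=(1\pm o(1))np$ at every vertex and $np=\omega(\sqrt n)$, Chernoff plus a union bound give that a.a.s.\ every vertex has $(1/2+o(1))d_G(v)$ neighbors on its own side. Deleting all monochromatic edges then removes $(1/2+o(1))d_G(v)$ edges at each vertex and leaves a bipartite subgraph, which is triangle-free and hence not pancyclic.

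\textbf{Lower bound: $\mathrm{res}\ge 1/2-o(1)$.} Fix $\eps>0$; the task is to show a.a.s.\ every $G'\subseteq G$ satisfying $d_{G'}(v)\ge (1/2+\eps)d_G(v)$ is pancyclic. Decompose $p=p_1+p_2$ with $p_1=(1-\gamma)p$ and $p_2=\gamma p = \omega(n^{-1/2})$ for a small $\gamma=\gamma(\eps)>0$, sample $G_1\sim\gnp(n,p_1)$ and $G_2\sim\gnp(n,p_2)$ independently, and set $G:=G_1\cup G_2$ (which is $\gnp(n,q)$ with $q\approx p$). Let $H:=G\setminus G'$ be the adversary's deletions. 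A standard sprinkling argument — view $H$ as a function of $G$, condition on $G$, then resample the $G_1/G_2$ labels of edges of $G$ as independent Bernoullis with the appropriate conditional parameters — together with Chernoff shows that a.a.s.\ the slice $G_i\setminus H$ inherits the resilience bound at level $\eps/3$, i.e.\
\[
d_{G_i\setminus H}(v)\;\ge\;\bigl(\tfrac12+\tfrac{\eps}{3}\bigr)np_i \qquad \text{for }i=1,2\text{ and every }v.
\]
Applying \thmref{thm:resHam} inside $G_1$ then yields a Hamilton cycle $C=v_1v_2\cdots v_nv_1\subseteq G_1\setminus H$, settling $\ell=n$.

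For $3\le\ell\le n-1$, I look for chords of $C$ in $G_2\setminus H$: for $k\in\{2,\ldots,\lfloor n/2\rfloor\}$, write $\mc M_k=\{v_iv_{i+k}:1\le i\le n\}$ (indices mod $n$) for the family of ``$k$-chords'' of $C$. A single edge of $\mc M_k$ together with an arc of $C$ produces cycles of length $k+1$ and $n-k+1$, so as $k$ ranges over $\{2,\ldots,\lfloor n/2\rfloor\}$ these cover all cycle lengths in $\{3,\ldots,n-1\}$. The direct strategy would be: for each $k$, combine the fact that $|\mc M_k\cap G_2|$ is $\Bin(\Theta(n),p_2)$-distributed with mean $\Theta(np_2)$, the sprinkling estimate that the adversary intersects $G_2$ only in a $\gamma$-fraction of its edges per vertex, and Chernoff, to conclude $\mc M_k\cap(G_2\setminus H)\ne\emptyset$; the failure probability for fixed $k$ is $\exp(-\Omega(np_2))$, and since $np_2\gg\log n$ a union bound over the $O(n)$ values of $k$ is absorbed.

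\textbf{Main obstacle.} The direct count above is \emph{not} by itself enough: since $\mc M_k$ has only $2$ edges at each vertex, the adversary's per-vertex budget of $(1/2-\eps)d_G(v)$ edges trivially allows deleting all of $\mc M_k\cap G$ for any single $k$, so for the chosen $C$ a specific $\mc M_k(C)$ can be entirely wiped out. The resolution, and the technical heart of [KLS], is that $C$ is not forced on us: $G_1\setminus H$ contains a vast family of Hamilton cycles (via Pósa rotation/extension inside the resilient graph), and one selects or progressively modifies $C$ so that $\mc M_k(C)\cap(G_2\setminus H)\ne\emptyset$ simultaneously for all $k$. Concretely, whenever some $\mc M_k(C)$ turns out empty, one performs a Pósa-style rotation through a carefully chosen edge of $G_2\setminus H$ to pass to a neighboring Hamilton cycle $C'$ with a richer $\mc M_k(C')$, without spoiling the $\mc M_{k'}$ for $k'\ne k$. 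Making this simultaneous rotation argument uniform over $k$ is the delicate step I expect to consume the bulk of the work; the sprinkling setup and the Hamiltonicity input are the easier ingredients.
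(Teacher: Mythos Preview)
This theorem is not proved in the present paper; it is quoted from \cite{KLS} and invoked as a black box in the proof of \thmref{thm:app2}. There is thus no proof here to compare your proposal against.

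On the proposal itself: the upper bound via a random bipartition is standard and fine. For the lower bound, your outline points in the right direction but is, by your own admission, not a proof. The decisive difficulty is exactly the one you flag in your ``Main obstacle'' paragraph: for any \emph{fixed} Hamilton cycle $C$ the adversary can wipe out an entire chord class $\mc M_k(C)$ while staying well within the per-vertex budget, so the na\"ive Chernoff-plus-union-bound over $k$ does not close. The suggestion to rotate $C$ so as to repair each missing $k$ without spoiling the others is precisely the technical heart of \cite{KLS}, and you have not supplied it; until that step is actually carried out, what you have is a plan rather than a proof.

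One further caution on the sprinkling step. The ``resample the $G_1/G_2$ labels'' device is more delicate than your one-line summary suggests. You are right that ``$G\setminus H$ is pancyclic'' is a deterministic statement about $(G,H)$, so exhibiting a single favorable labeling would suffice. But that labeling must simultaneously land in the event ``$G_1$ satisfies the hypothesis of \thmref{thm:resHam}'' and in whatever structural event you need for $G_2$, in addition to giving a proportional split of $H$ between the two rounds. The first two of these hold a.a.s.\ over the \emph{joint} law of $(G_1,G_2)$, which is not automatically the same as holding with high conditional probability over labels given a fixed typical $G$; your sentence ``together with Chernoff shows that a.a.s.\ the slice $G_i\setminus H$ inherits the resilience bound'' glosses over this, and as stated the claim for $i=2$ is false without the relabeling trick (the adversary, who sees $G_1$ and $G_2$, can put all deletions into $G_2$).
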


The following theorem shows that a sparse random graph $G\sim
\gnp(n,p)$ is typically such that even if one deletes a small fixed
fraction of edges from each vertex $v\in V(G)$, it still contains a
copy of every tree $T$ having a bare path of linear length and
having bounded maximum degree. This result relates to the local
resilience of the property of being \emph{ universal} for this
particular class of trees, and it is an essential component in the
proof of \thmref{thm:appTrees}.

\begin{theorem} \label{thm:resTrees}
For every $\alpha>0$ and $\Delta>0$, there exist $\eps>0$ and
$C_0$ such that for every $p\ge C_0\ln n/n$, $G\sim\gnp(n,p)$ is \aas
such that the following holds. For every subgraph $H\subseteq G$
with $\Delta(H)\le \eps np$, the graph $G'=G\setminus H$
contains copies of all spanning trees $T$ such that:
\begin{enumerate}[$(i)$]
\item $\Delta(T)\le \Delta$, and
\item $T$ contains a bare path of length at least $\alpha n$.
\end{enumerate}
\end{theorem}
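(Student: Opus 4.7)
The plan is to split the target tree $T$ into a ``bulk'' forest, obtained by removing the interior of a portion of the bare path, embed the bulk into one random-graph ``layer'' of $G$, and complete the embedding by a Hamilton path that plays the role of the removed subpath. By the standard two-round exposure, write $G\sim\gnp(n,p)=G_1\cup G_2$ with $G_1,G_2\sim\gnp(n,p')$ independent and $p'=\Theta(p)$. Split the adversarial subgraph as $H=H_1\cup H_2$ with $H_i\subseteq G_i$, so each $H_i$ still has maximum degree at most $\eps np$. Fix $\beta=\alpha/3$. Given $T$ with a bare path $P$ of length at least $\alpha n$, choose a subpath $P'\subseteq P$ with exactly $\beta n$ interior vertices and endpoints $u_1,u_2\in V(T)$. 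Removing $\mathrm{int}(P')$ from $T$ yields a forest $T^*$ on $(1-\beta)n$ vertices with two tree components, still of maximum degree at most $\Delta$.

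Next, sample a uniformly random partition $V(G)=A\sqcup B$ with $|B|=\beta n$. Standard Chernoff arguments give that a.a.s. the partition is balanced with respect to each $G_i$: every vertex has $(1\pm o(1))\beta np'$ neighbors in $B$ in $G_i$, and $G_1[A]$, $G_2[B]$ are distributed like random graphs of the appropriate size and density. We now embed $T^*$ into $G_1[A]\setminus H_1$ via a resilient Friedman--Pippenger-type argument: since $G_1[A]$ a.a.s. has strong vertex-expansion on small sets and removing at most $\eps np$ edges per vertex preserves this expansion, every forest on $(1-\beta)n$ vertices with maximum degree $\Delta$ can be embedded. To control the images of $u_1$ and $u_2$, identify two disjoint ``good'' sets $V_1,V_2\subseteq A$ of linear size such that every $v\in V_i$ has at least $\tfrac12\beta np'$ neighbors in $B$ under $G_2\setminus H_2$; such vertices are abundant by Chernoff. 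Using a Haxell-type extension lemma (or by embedding $u_1,u_2$ first into $V_1,V_2$ and growing the rest from them), arrange $\phi(u_1)\in V_1$ and $\phi(u_2)\in V_2$.

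Finally, we place the $\beta n$ interior vertices of $P'$ into $B$ by finding a Hamilton path from $x:=\phi(u_1)$ to $y:=\phi(u_2)$ in $G_2[B\cup\{x,y\}]\setminus H_2$. Since $x$ and $y$ have linearly many $(G_2\setminus H_2)$-neighbors in $B$, it suffices to find a Hamilton path in $G_2[B]\setminus H_2$ between some neighbor of $x$ and some neighbor of $y$. The graph $G_2[B]$ is distributed as $\gnp(\beta n,p')$ with $p'=\Omega(\log(\beta n)/(\beta n))$, and $H_2\cap E(G_2[B])$ has maximum degree at most $\eps np=O(\eps/\beta)\cdot\beta np'$; for $\eps$ sufficiently small relative to $\beta$, a Hamilton-connectedness version of \thmref{thm:resHam} (obtained by the standard rotation--extension machinery, cf.~\cite{LS}) delivers the required path.

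The main obstacle is the second step: simultaneously achieving resilience of the tree embedding under removal of $H_1$ and enough flexibility to force $\phi(u_1),\phi(u_2)$ to lie in the prescribed sets $V_1,V_2$. The Friedman--Pippenger procedure is greedy and somewhat rigid, so one must either order the embedding with $u_1,u_2$ first and verify that enough expansion survives for the remaining $T^*$, or invoke a more flexible extension-style lemma. Balancing these two constraints (resilience of expansion for $G_1$ versus image control for $G_2$) is what drives the choice of $\eps$ and $C_0$ and forces $\eps$ to be small compared to $\alpha$, $\Delta$, and the absolute constant in the Hamilton-connectedness resilience result.
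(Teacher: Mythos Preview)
Your overall architecture---remove part of the bare path, embed the rest in one random layer, complete by a Hamilton path in the other---is the same as the paper's. However, there is a genuine gap in your second step. You set $|A|=(1-\beta)n$ and $|V(T^*)|=(1-\beta)n$, so the embedding of $T^*$ into $G_1[A]\setminus H_1$ is \emph{spanning}. Friedman--Pippenger-type arguments (and the Balogh--Csaba--Samotij theorem, \thmref{thm:resAlmostTrees}, which is what the paper actually invokes) fundamentally require linear slack: expansion of small sets only lets one embed forests of order at most $(1-\gamma)|A|$ for some fixed $\gamma>0$. Once almost all of $A$ is occupied, the remaining vertices have no room to expand into and the greedy procedure stalls. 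The assertion that ``every forest on $(1-\beta)n$ vertices with maximum degree $\Delta$ can be embedded'' into a host of exactly $(1-\beta)n$ vertices is therefore not supported by the tools you name; resilient spanning-tree universality at $p=\Theta(\log n/n)$ is a much deeper statement and is in fact what the whole theorem is trying to establish for this class of trees.

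The paper deliberately creates slack: it reserves a random set $V_0$ of size $0.99\alpha n$ and embeds a tree on $(1-\alpha)n+1$ vertices into $G_1\setminus V_0$, which has $(1-0.99\alpha)n$ vertices, leaving roughly $0.01\alpha n$ vertices of headroom for \thmref{thm:resAlmostTrees}. The price is that the set $V'$ hosting the Hamilton path is no longer a single fixed random set; it is $V_0$ together with the leftover of the embedding, and hence depends on $T$ and on $H$. Consequently your third step---a black-box Hamilton-connectedness resilience result applied to the fixed random set $B$---does not survive the correction either. The paper instead proves an expansion property valid for \emph{every} admissible $V'$ (\clref{claim11}), introduces $e$-boosters to force the cycle through the two prescribed endpoints (\lemref{lem:boosters}), and takes a union bound over all choices of $V'$, $H$, and booster sequences inside $G_2$ (\lemref{GnpContainsManyBoosters}). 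This union bound is what replaces the endpoint-control problem you flagged as the main obstacle; trying to pin $\phi(u_1),\phi(u_2)$ in advance is neither necessary nor, without the missing slack, sufficient.
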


In order to prove \thmref{thm:resTrees} we need the following
theorem due to Balogh, Csaba and Samotij \cite{BCS} about the local
resilience of random graphs with respect to the property
``containing all the almost spanning trees with bounded degree''.

\begin{theorem}[Theorem 2, \cite{BCS}] \label{thm:resAlmostTrees}
Let $\beta$ and $\gamma$ be positive constants, and assume that
$\Delta\ge 2$. There exists a constant $C_0=C_0(\beta,\gamma,\Delta)$ such
that for every $p\ge C_0/n$, a graph $G\sim \gnp(n,p)$ is \aas such
that the following holds. For every subgraph $H$ of $G$ for which
$d_H(v)\le (1/2-\gamma)d_G(v)$ for every $v\in V(G)$, the graph
$G'=G\setminus H$ contains all trees of order at most $(1-\beta)n$
and maximum degree at most $\Delta$.
\end{theorem}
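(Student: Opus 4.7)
The plan is a two-phase embedding strategy: a greedy BFS embedding of a skeleton of $T$ into most of $V(G)$, followed by a Hall-type matching that absorbs a reserved set of leaves of $T$ into an untouched reservoir of $V(G)$. Both phases are driven by resilient pseudo-randomness of $G\setminus H$ inherited from $G$ via the hypothesis $d_H(v)\le(1/2-\gamma)d_G(v)$.

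First I would record the a.a.s.\ pseudo-randomness properties of $G\sim\gnp(n,p)$ that follow from Chernoff (\lemref{Che}) and a union bound, under $p\ge C_0/n$ with $C_0=C_0(\beta,\gamma,\Delta)$ chosen sufficiently large. These are: (P1) most vertices have degree close to $np$ and the few outliers form a sparse subset; (P2) for every pair of disjoint linear-size sets $X,Y\subset V(G)$, $e_G(X,Y)=(1\pm o(1))|X||Y|p$; (P3) for every small collection $S$ of vertices, the joint neighborhood $\bigcap_{v\in S}N_G(v)$ has close to the expected size. The resilience hypothesis then transfers the bipartite expansion from $G$ to $G\setminus H$, yielding $e_{G\setminus H}(X,Y)\ge(1/2+\gamma-o(1))|X||Y|p$ for every pair of disjoint linear-size sets.

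Next, given $T$ with $|V(T)|\le(1-\beta)n$ and $\Delta(T)\le\Delta$, I would fix a root of $T$ and decompose $T=T'\cup L$, where $L$ is a set of $|L|=(\beta/3)n$ leaves; since $\Delta(T)\le\Delta$, any linear-sized subtree of $T$ has a linear number of leaves, so such an $L$ exists. I would also reserve a subset $R\subset V(G)$ of size $(\beta/3)n$ in advance, chosen so that a.a.s.\ $R$ has robust $G\setminus H$-expansion into every linear-size subset of $V(G)\setminus R$. Embed $T'$ greedily in BFS order into $V(G)\setminus R$ via $G\setminus H$-edges: at each step the candidate set is $N_{G\setminus H}(f(u))\setminus\bigl(R\cup f(\text{already embedded})\bigr)$, which is shown to be nonempty by applying the bipartite expansion of $G\setminus H$ to $\{f(u)\}$ against $V(G)\setminus R\setminus f(\text{already embedded})$. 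Finally, complete the embedding by matching each $\ell\in L$ to a vertex of $R$ adjacent in $G\setminus H$ to $f(\mathrm{parent}(\ell))$, via Hall's theorem on the bipartite graph between $L$ and $R$ induced by $G\setminus H$-edges.

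The main obstacle is robustness of both steps in the genuinely sparse regime $p=\Theta(1/n)$, where vertex degrees are only $\Theta(C_0)$ so concentration is weak. In the greedy step, a constant-sized $N_{G\setminus H}(f(u))$ can in principle be completely consumed by earlier embedding choices; in the Hall step, the bipartite graph on $L$-parents and $R$ is sparse, so Hall's condition is nontrivial for small $S\subset L$. To address both, I would order the vertices of $T$ so that each greedy step is taken against a "fresh" region of $V(G)$ (for instance via depth-first exploration of long subpaths), and prove by a careful union bound that the bad events (candidate set empty, or Hall deficit on $R$) each have probability $o(1)$. Tuning $|L|$, $|R|$, the exploration order, and the error terms in (P1)--(P3) against $\beta,\gamma,\Delta$ is the most delicate part of the argument.
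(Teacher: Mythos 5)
You are attempting to prove a statement that the paper itself does not prove: Theorem~\ref{thm:resAlmostTrees} is quoted verbatim from Balogh--Csaba--Samotij \cite{BCS} and used as a black box, so the comparison can only be against the known proof, which is substantially more involved than your sketch. The first genuine gap is your decomposition $T=T'\cup L$ with $|L|=(\beta/3)n$ leaves: a tree with $\Delta(T)\le\Delta$ need not have linearly many leaves at all (a path has exactly two), so the claim that ``any linear-sized subtree of $T$ has a linear number of leaves'' is false, and the whole leaf-absorption phase collapses for path-like trees. Any correct argument must handle the standard dichotomy (either $\Omega(n)$ leaves or $\Omega(n)$ vertex-disjoint bare paths), and absorbing bare paths is a path-embedding/Hamilton-connectivity task, not a Hall matching --- this is precisely why the enclosing paper's own Theorem~\ref{thm:resTrees} treats the bare path by a separate booster/rotation argument. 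A second, related problem is the order of quantifiers for your reservoir $R$: the adversary $H$ is chosen after the a.a.s.\ properties of $G$ are fixed, and since a vertex $v$ typically has only about $(\beta/3)d_G(v)<(1/2-\gamma)d_G(v)$ edges into a pre-chosen $R$, the adversary may delete \emph{all} of them while respecting $d_H(v)\le(1/2-\gamma)d_G(v)$; if instead you pick $R$ after seeing $H$, you need a deterministic existence argument, and at $p=\Theta(1/n)$ a constant-degree parent image misses any fixed density-$\beta/3$ set with constant probability, so Hall's condition can fail for linearly many leaves unless the embedding of $T'$ is coordinated with $R$ --- bookkeeping your sketch does not supply.

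The second gap is the greedy BFS phase in the genuinely sparse regime $p=\Theta(C_0/n)$, which is the whole point of the theorem. There, neighborhoods have constant size, an $e^{-\Theta(C_0)}$-fraction of vertices (including isolated ones) have degrees far from $np$, and your property (P2) with $(1\pm o(1))$ error over all $4^n$ pairs of linear-size sets is unavailable --- only a fixed relative error $\eps(C_0)$ can be extracted from Chernoff plus a union bound. More fundamentally, expansion of linear-size sets applied ``to $\{f(u)\}$'' says nothing: the candidate set $N_{G\setminus H}(f(u))$ has constant size, the already-embedded set is built adaptively (so the ``bad events'' are not events about $G$ alone), and over $\Theta(n)$ steps a naive union bound does not close. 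This is exactly the point where the actual proofs require an embedding scheme maintaining an extendability invariant (Friedman--Pippenger/Haxell-type arguments, together with a structural partition of $G'$ isolating the atypical vertices), which is the core of \cite{BCS} and is absent from your outline. As it stands, your proposal is a plausible plan only for dense $p$ and leaf-rich trees; it does not establish the cited theorem in the stated generality.
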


\begin{proof}[Proof of \thmref{thm:resTrees}]
Let $\alpha>0$ and $\Delta>0$ be two positive constants. Let
$\eps:=\eps(\alpha)>0$ be a sufficiently small constant and let
$C_0=C_0(\eps,\Delta)>0$ be a sufficiently large constant. Let
$G\sim \gnp(n,p)$ be a random graph, $H\subseteq G$ be any subgraph
with $\Delta(H)\le \eps np$ and denote $G'=G\setminus H$. We wish to
show that $G'$ contains a copy of every spanning tree $T$ which
satisfies $(i)$ and $(ii)$. This can be done as follows. Assume that
$G$ has been generated by a two-round-exposure and is presented as
$G=G_1\cup G_2$, where $G_1\sim \gnp(n,p/2)$, $G_2 \sim \gnp(n,q)$,
and $q$ is a positive constant for which $1-p=(1-p/2)(1-q)$. Observe
that $q>p/2$. Let $V_0$ be a random subset of $V(G)$ of size
$|V_0|=0.99\alpha n$ and denote $G'_1=G_1 \setminus V_0$. Note that
$G'_1\sim \gnp((1-0.99\alpha)n,p/2)$ and that \aas $d_{G'_1}(v)\ge
(1-0.99\alpha- \eps)np/2$ for every $v\in V(G'_1)$ (this can be
easily shown using Chernoff's inequality, choosing $C_0$
appropriately, and applying the union bound). In addition, note that
for every $v\in V(G)$, the degree of $v$ into $V_0$ (in $G_1$) is at
least $(0.99\alpha-\eps)np/2$. Let $T$ be a tree which satisfies
$(i)$ and $(ii)$, and let $P=v_0v_1 \ldots v_t$ be a bare path of
$T$ with $t=\alpha n$. Let $T'$ be the tree obtained from $T$ by
deleting $v_1,\ldots,v_{t-1}$ and adding the edge $v_0v_t$. Note
that $|V(T')|=(1-\alpha) n+1$.

Let $\beta$ be such that $(1-\beta)|V(G'_1)|=|V(T')|$. Applying
\thmref{thm:resAlmostTrees} to $G'_1$, with (say) $\gamma=1/4$
and $\beta$, using the fact that $\eps$ is sufficiently small
we conclude that there exists a copy $T''$ of $T'$ in $G'_1\setminus
H$. Let $x$ and $y$ denote the images (in $T''$) of $v_0$ and $v_t$
(from $T'$), respectively.

Let $V'=(V(G)\setminus V(T''))\cup\{x,y\}$. In order to complete the
proof, we should be able to show that $(G\setminus H)[V']$ contains
a Hamilton path with $x$ and $y$ as its endpoints. Note that
$V_0\subseteq V'$ and that $V'\setminus V_0$ and the two designated
vertices $x$ and $y$ heavily depend on the tree $T$ which we are
trying to embed. Therefore, we wish to show that $G$ is \aas such
that for every possible option for $V'$ (with two designated
vertices $x$ and $y$), $(G\setminus H)[V']$ contains a Hamilton path
with $x$ and $y$ as its endpoints. For this, note that \aas
$\delta\left(\left(G_1\setminus H\right)[V']\right)\ge 0.491\alpha np$.
Indeed, as we previously remarked, \aas every $v\in V(G)$
has degree (in $G_1$) at least $(0.99\alpha-\eps)np/2$ into
$V_0$. Since $\eps$ is small enough, it follows that
$\delta(G_1[V'])\ge (0.99\alpha-\eps)np/2\ge 0.494\alpha np$.
Combining the last inequality with the fact that $\Delta(H)\le
\eps np$, we obtain $\delta((G_1\setminus H)[V'])\ge
(0.494\alpha-\eps)np\ge 0.491 \alpha np$.
Now, using the following claim (will be proven later) we deduce that
a graph $G_1 \sim \gnp(n,p/2)$ is \aas such that any subgraph
$D\subseteq G_1$ on $\alpha n+1$ vertices with $\delta(D)\ge 0.49\alpha np$
has ``good" expansion properties (our candidate for $D$ will be
$\left(G_1\setminus H\right)[V']$, and we assume that
$\eps<0.001\alpha$).

\begin{claim}\label{claim11}
A graph $G_1\sim \gnp(n,p/2)$ (where $p$ is the same as in
\thmref{thm:resTrees}) is \aas such that for any
subgraph $D\subset G_1$ with $|V(D)|=\alpha n+1$ and with
$\delta(D)\ge 0.49\alpha np$, the following holds:
\[
  |N_{D}(X)\setminus X|\ge 2|X|+2
\]
for every $X\subseteq V(D)$ with $|X|\le |V(D)|/5$.
\end{claim}

Next, we show how to use the edges of $G_2$ in order to turn the graph
$(G_1\setminus H)[V']$ into a graph which contains a Hamilton path connecting
$x$ and $y$. A routine way to turn a non-Hamiltonian graph $D$ that satisfies
some expansion properties (as in \clref{claim11}) into a
Hamiltonian graph is by using \emph{boosters}. A booster is a
non-edge $e$ of $D$ such that the addition of $e$ to $D$ creates a
path which is longer than a longest path of $D$, or
 turns $D$ into a Hamiltonian graph. In order to turn $D$ into a
Hamiltonian graph, we start by adding a booster $e$ of $D$. If the
new graph $D\cup \{e\}$ is not Hamiltonian then one can continue by
adding a booster of the new graph. Note that after at most $|V(D)|$
successive steps the process must terminate and we end up with a
Hamiltonian graph. The main point using this method is that it is
well-known (for example, see \cite{BolRand}) that a non-Hamiltonian
graph $D$ with ``good" expansion properties has many boosters.
However, our goal is a bit different. We wish to turn $D$ into a
graph that contains a Hamilton path with $x$ and $y$ as its
endpoints. In order to do so, we add one (possibly) fake edge $xy$
to $D$ and try to find a Hamilton cycle that contains the edge $xy$.
Then, the path obtained by deleting this edge from the Hamilton
cycle will be the desired path. For that we need to define the
notion of \emph{$e$-boosters}.

Given a graph $D$ and a pair $e\in \binom{V(D)}{2}$, consider a
path $P$ of $D\cup \{e\}$ of maximal length which contains $e$ as an
edge. A non-edge $e'$ of $D$ is called an $e$-booster if
$D\cup\{e,e'\}$ contains a path $P'$ which passes through $e$ and
which is longer than $P$, or that $D\cup\{e,e'\}$ contains a
Hamilton cycle that uses $e$. The following lemma shows that every
connected and non-Hamiltonian graph $D$ with ``good" expansion
properties has many $e$-boosters for every possible $e$.

\begin{lemma} \label{lem:boosters}
Let $D$ be a connected graph for which $|N_{D}(X)\setminus X|\ge
2|X|+2$ holds for every subset $X\subseteq V(D)$ of size $|X|\le
k$. Then, for every pair $e\in \binom{V(D)}{2}$ such that
$D\cup\{e\}$ does not contain a Hamilton cycle which uses the edge
$e$, the number of $e$-boosters for $D$ is at least $(k+1)^2/2$.
\end{lemma}

The proof of the previous lemma is very similar to the proof of the
well-known P\'osa's lemma using the ordinary boosters (\cite{FK},
Lemma 4), and hence we postpone it to the appendix. The only
difference is that in the proof of \lemref{lem:boosters} we
forbid rotations that destroy the edge $e$; and so the number of
possible rotations with a given fixed endpoint drops by at most two.

Note that in order to turn $(G_1\setminus H)[V']$ into a graph that
contains a Hamiltonian cycle passes through $e$, one should
repeatedly add $e$-boosters, one by one, at most $|V'|=\alpha n$
times. Therefore, in order to complete the proof, it is enough to
show that \aas a graph $G_2\sim \gnp(n,q)$ is such that
$G_2\setminus H$ contains ``many" $e$-boosters for any graph
obtained from $(G_1\setminus H)[V']$ by adding a set of edges $E_0$
of size at most $\alpha n$. In the following lemma we formalize and
prove this statement. This is the final ingredient in the proof of
\thmref{thm:resTrees}.

\begin{lemma}\label{GnpContainsManyBoosters}
Assume that $G_1$ satisfies the conclusion of \clref{claim11}. Then
$G_2\sim \gnp(n,q)$ is \aas such that the following holds. Suppose that
\begin{enumerate}[$(i)$]
\item $V'\subseteq V(G_1)$ is a subset of size $|V'|=\alpha n+1$,
\item $H\subset G_1$ is a subgraph such that $\delta((G_1\setminus H)[V'])\ge 0.49\alpha np$,
\item $e=xy\in \binom{V'}{2}$, and
\item $E_0$ is a subset of at most $\alpha
n$ pairs of $V'$.
\end{enumerate}
Then, $G_{E_0,e,V',H}=(G_1\setminus H)[V']\cup E_0\cup \{e\}$ contains
a Hamilton cycle that passes through $e$, or $E(G_2)$ contains at
least $\alpha^2n^2p/200$ $e$-boosters for $G_{E_0,e,V',H}$.
\end{lemma}

\begin{proof}
Since $G_1$ satisfies the conclusion of \clref{claim11}, by
combining it with \lemref{lem:boosters}, it follows that for every
$V'\subseteq V(G_1)$ of size $\alpha n+1$ and $e\in \binom{V'}{2}$,
for every subset $E_0$ of at most $\alpha n$ pairs of $V'$, for
every $H$ with $\delta(G_1\setminus H)[V']\ge 0.49\alpha np$, the graph
$G_{E_0,e,V',H}$ has at least $\alpha^2n^2/50$ $e$-boosters. Fix such
$V'$, $e$, $E_0$ and $H$, and observe that the expected number of
$e$-boosters in $G_2$ is at least $(\alpha^2n^2/50)\cdot q\geq
\alpha^2n^2p/100$ (recall that $q>p/2$). Therefore, by Chernoff's
inequality (\lemref{Che}) it follows that the probability for
$E(G_2)$ to have at most $\alpha^2n^2p/200$ $e$-boosters for
$G_{E_0,e,V',H}$ is at most $\exp(-Cn^2p)$, where $C$ is a constant
which depends only on $\alpha$. Applying the union bound, running
over all the options for choosing $V'$, $H$, $e$ and $E_0$, we
obtain that the probability for having such $V'$, $e$, $H$ and $E_0$ for
which $G_2$ contains at most $\alpha^2n^2p/200$ $e$-boosters for
$G_{E_0,e,V',H}$ is at most
\[
  \sum_{t=1}^{\eps n^2p}2^n\binom{e(G_1[V'])}{t}n^2
  \binom{\alpha^2 n^2}{\alpha n}\exp(-Cn^2p)\le
\]
\[
  2^n\eps n^4p\left(\frac{e\alpha^2n^2p}{\eps n^2p}\right)^{\eps n^2p}
  \left(e\alpha n\right)^{\alpha n}\exp(-Cn^2p)=o(1).
\]
where the last inequality holds for $\eps$ which is much smaller
than $\alpha$ and for $p\geq C_0\ln n/n$ where $C_0$ is sufficiently
large. This completes the proof.
\end{proof}

Before we complete the proof of \thmref{thm:resTrees}, we prove
\clref{claim11}.

\begin{proof}[Proof of \clref{claim11}.]
Let $S\subseteq V(G_1)$ be any subset of vertices of size $|S|\le \frac{2\alpha
n}{\sqrt{\ln n}}$, and note that $|E(G_1[S])|\sim
\Bin(\binom{|S|}{2},p/2)$. Therefore, using \lemref{l:Che}
we obtain that
\begin{align}
  \Prob[|E(G_1[S])|\ge |S|np/\ln\ln n] & \le
  \left(\frac{e|S|^2p\ln\ln n}{4|S|np}\right)^{|S|np/\ln\ln n}\nonumber \\
  &=\left(\frac{e|S|\ln\ln n}{4n}\right)^{|S|np/\ln\ln n}.
\end{align}

Let $\mathcal E$ denote the event ``there exists a subset $S\subseteq V(G_1)$ of size $|S|\le \frac{2\alpha
n}{\sqrt{\ln n}}$ for which $|E(G_1[S])|\ge |S|np/\ln\ln n$". By applying the union bound and the estimate $(1)$ we obtain that

\begin{align}
\Pr\left[\mathcal E\right]=&\sum_{s=1}^{\frac{2\alpha n}{\sqrt{\ln
n}}}\binom{n}{s}\left(\frac{es\ln\ln n}{4n}\right)^{snp/\ln\ln
n} \nonumber \\
&\leq \sum_{s=1}^{\frac{2\alpha n}{\sqrt{\ln
n}}}\left(\frac{en}{s}\right)^s \left(\frac{es\ln\ln
n}{4n}\right)^{snp/\ln\ln n}=o(1).
\end{align}

Now, let $D\subseteq G_1$ be a subgraph on $\alpha n+1$ vertices
with $\delta(D)\ge 0.49\alpha np$, and we wish to show that for
every $X\subseteq V(D)$, if $|X|\le |V(D)|/5$, then
$|N_D(X)\setminus X|\ge 2|X|+2$. First, we consider the case $|X|\le
\frac{\alpha n}{3\sqrt{\ln n}}$. Assume that there exists a subset
$X\subseteq V(G_1)$ in this range for which $|N_{D}(X)\setminus
X|\le 2|X|+1$. Using the fact that $\delta(D)=\Theta(np)$ we obtain
$|E(D[X\cup N_D(X)])|\ge |X|\cdot\Theta(np)$. Now, since $|E(D[X\cup
N_D(X)])|\le |E(G_1[X\cup N_D(X)])|$ and since $|X\cup N_D(X)|\le
\frac{\alpha n}{\sqrt{\ln n}}+2<\frac{2\alpha n}{\sqrt{\ln n}}$, by $(2)$ it happens with probability $o(1)$. Therefore, we conclude that \aas
$|N_{D}(X)\setminus X|\ge 2|X|+2$ holds for every subset $X\subseteq
V(D)$ of size at most $\frac{\alpha n}{3\sqrt{\ln n}}$.

Second, assume $\frac{\alpha n}{3\sqrt{\ln n}}<|X|\le |V(D)|/5$. In
this range it is enough to show that \aas for every two disjoint
subsets of vertices $X,Y\subseteq V(D)$ of sizes $|X|=\frac{\alpha
n}{3\sqrt{\ln n}}$ and $|Y|=\alpha n/10$ we have $|E_D(X,Y)|\neq 0$.
Indeed, let $X\subseteq V(D)$ be a subset in this range and assume
that $|N_D(X)\setminus X|\le 2|X|+2$. In particular, since $|X|\le
|V(D)|/5$ we conclude that $|X\cup N_D(X)|\le |X|+2|X|+2\le
4|V(D)|/5$. Therefore, one can find $X'\subseteq X$ of size
$|X'|=\frac{\alpha n}{3\sqrt{\ln n}}$ and $Y\subseteq V(D)\setminus
\left(X\cup N_D(X)\right)$ of size $|Y|=\alpha n/10$ for which
$|E_D(X,Y)|=0$, a contradiction.

In order to show that the above mentioned property \aas holds, let
$X,Y\subseteq V(G_1)$ be two disjoint subsets of sizes
$|X|=\frac{\alpha n}{3\sqrt{\ln n}}$ and $|Y|=\alpha n/10$. For a
vertex $x\in X$, let $d_{G_1}(x,Y)$ denote the number of neighbors
of $x$ in $Y$, and observe that $d_{G_1}(x,Y)\sim \Bin(|Y|,p/2)$.
Therefore, the probability that $d_{G_1}(x,Y)$ is outside the
interval $(|Y|p/3,2|Y|p)$ is at most
$e^{-\Theta(|Y|p|)}=e^{-\Theta(np)}$. Since all the events
$d_{G_1}(x',Y)\notin (|Y|p/3,2|Y|p)$ ($x'\in X$) are mutually
independent, we conclude that
$$\Prob[\textrm{ for at least } \frac{\ln\ln n}p \textrm{ vertices } x\in X \textrm{ we have } d_{G_1}(x,Y)\notin
(|Y|p/3,2|Y|p)]$$
$$\leq \binom{n}{\ln\ln n/p}e^{-\Theta(np\cdot
\ln\ln n/p)}=e^{-\Theta(n\ln\ln n)}.$$

Now, by applying Chernoff's inequality and the union bound, we obtain that the
probability for having two such sets $X$ and $Y$ such that for at
least $\frac{\ln\ln n}p$ vertices $x\in X$ we have
$d_{G_1}(x,Y)\notin (|Y|p/3,2|Y|p)$, is at most

\[
  \binom{n}{\alpha n/(3\sqrt{\ln n})}\binom{n}{\alpha n/10}
  e^{-\Theta(n\ln\ln n)}\le 4^ne^{-\Theta(n\ln\ln n)}=o(1).
\]

Next, we wish to show that \aas in $G_1$, there are at most
$\frac{\ln\ln n}p$ vertices $v\in V(D)$ with $d_{G_1}(v,V(D))\notin
(0.99|V(D)|p/2,1.01|V(D)|p/2)$. This can be done in the following way: for each subset of vertices $D\subset V(G_1)$ of size $\alpha n+1$, we fix an arbitrary orientation of the complete graph induced by $V(D)$ which is as regular as possible and consider $G_1[V(D)]$ as an \emph{oriented} graph. Now, note that clearly the event `` there are at most
$\frac{\ln\ln n}p$ vertices $v\in V(D)$ with $d_{G_1}(v,V(D))\notin
(0.99|V(D)|p/2,1.01|V(D)|p/2)$" is contained in the event $\mathcal E'=$ `` there are at most
$\frac{\ln\ln n}p$ vertices $v\in V(D)$ with at least one of $d^+_{G_1}(v,V(D))$ or $d^-_{G_1}(v,V(D))$ not in
$(0.99|V(D)|p/4,1.01|V(D)|p/4)$". Let $\sigma\in \{+,-\}$, $D$ and $v\in V(D)$, and note that $d^{\sigma}_{G_1}(v,V(D))\sim \Bin(|V(D)|/2,p/2)$ and that the random variables $\{d_{G_1}^{\sigma}(v,V(D)):v\in V(D)\}$ are mutually independent. Therefore, using a similar calculation as before and taking the union bound over $\sigma\in \{+,-\}$ we obtain the claim. Assuming this, let $X$ and $Y$ be two
subsets of sizes $|X|=\frac{\alpha n}{3\sqrt{\ln n}}$ and
$|Y|=\alpha n/10$. By the above mentioned arguments, there exists a
vertex $x\in X$ with $d_{G_1}(x,Y)\in (|Y|p/3,2|Y|p)$ and
$d_{G_1}(x,V(D))\in (0.99|V(D)|p/2,1.01|V(D)|p/2)$. Now, since $\delta(D)\ge
0.49\alpha np=0.98 \alpha np/2$, it follows that there are at most
$0.03\alpha np/2$ edges touching $x$ in $G_1$ which do not appear in
$V(D)$. Since $d_{G_1}(x,Y)\geq |Y|p/3\geq \alpha np/30$, it follows
$D$ still contains edges between $x$ and $Y$, and therefore
$|E_D(X,Y)|\neq 0$. All in all, we conclude that $|N_{D}(X)\setminus
X|\ge 2|X|+2$ holds for every $|X|\le |V(D)|/5$. This completes the
proof.
\end{proof}

This also completes the proof of Theorem \ref{thm:resTrees}.
\end{proof}

\section{Proof of the main result}
\label{section:main}

\begin{proof}[Proof of \thmref{thm:main}]
In order to prove the theorem, we provide Maker with a random
strategy that enables him to generate a random graph $G'\sim
\gnp(G,p)$, and \aas claim at least $1-\eps$ fraction of the
edges of $G'$ touching each vertex. We then use the fact that
$\mc P$ is $(G,p,\eps)$-resilient to conclude that $G'$
\aas satisfies $\mc P$. Note that since a Maker-Breaker game
is deterministic, and since the strategy we describe \aas ensures
Maker's win against any strategy of Breaker, it follows that Maker
also has a deterministic winning strategy.

We now present the random strategy for Maker. In this strategy,
Maker will gradually generate a random graph $G'\sim \gnp(G,p)$,
by tossing a biased coin on each edge of $G$, and declaring
that it belongs to $G'$ independently with probability $p$. Each
edge which Maker has tossed a coin for is called \emph{exposed}, and
we say that Maker is \emph{exposing} an edge $e \in E(G)$ whenever
he tosses a coin to decide about the appearance of $e$ in $G'$. To
keep track of the unexposed edges, Maker maintains a set
$U_v\subseteq N_G(v)$ of the \emph{unexposed neighbors} of $v$, for
each vertex $v$ in $G$; i.e. $u\in U_v$ if and only if the edge $vu$
remains to be exposed. Initially, $U_v=N_G(v)$ for all $v\in V(G)$.
We remark that Maker will expose all edges of $G$, even those that
belong to Breaker.

In every turn, Maker chooses an \emph{exposure vertex} $v$ (we will
later discuss the choice of the exposure vertex) and starts to
expose edges connecting $v$ to vertices in $U_v$, one by one in an
arbitrary order, until one edge in $G'$ is found (that is, until he
has a first success). If this exposure happens to reveal an edge
$vu\in E(G')$ not yet claimed by Breaker, Maker claims it and
completes his move. Otherwise, either the exposure failed to reveal
a new edge in $G'$ (failure of \emph{type I}), or the newly found
edge already belongs to Breaker (failure of \emph{type II}). In
either case, Maker skips his move. Let $f_I(v)$ and $f_{II}(v)$
denote the number of failures of type I and II, respectively, for
the exposure vertex $v$. We remind the reader that Maker's goal is
to make sure that at the end of the game $f_{II}(v)$ is relatively
small, namely, $f_{II}(v) \le \eps d_{G'}(v)$ for all $v\in V(G')$.
We do not know a priori what is the degree of $v$ in $G'$, since
$G'$ is random. However, it is true that \aas
\begin{equation}
  \label{eqn:deg_lower_bound}
  d_{G'}(v) \ge \frac{9}{10} d_G(v)p
\end{equation}
holds for all $v\in V(G)$. To see this, recall that $d_G(v) \ge
\delta(G) \ge \frac{10 \ln n}{\eps p}$, so for any fixed $v\in V(G)$,
\lemref{Che} implies that $\Prob[\Bin(d_G(v),p)<\frac{9}{10} d_G(v)p]=
o(\frac{1}{n})$. Hence, by the union bound, \aas
\eqref{eqn:deg_lower_bound} holds for all vertices in $G$.

In view of \eqref{eqn:deg_lower_bound}, to complete the proof of
\thmref{thm:main} it suffices to show that \aas Maker can ensure
that $f_{II}(v)\le \frac{9}{10}\eps d_G(v)p$ for all vertices $v\in
V(G)$ at the end of the game. Since Maker's goal here is to build a
random graph, if a failure of type I occurs it does not harm Maker.

To keep the failures of type II under control, concurrently to the
game played on $G$, we simulate a game $MinBox(n, 4\delta(G), p/2,
2b)$. In this simulated game, there is one box $F_v$ for each $v\in
V(G)$ which helps us to keep track of the exposure of edges touching
$v$. Initially, we set the sizes of the boxes as $|F_v|=4d_G(v)$.
Now, we describe Maker's strategy.

\noindent \textbf{Maker's strategy $S_M$:} Maker's strategy is
divided into the following two stages.

\textbf{Stage 1:} Before his move, Maker updates the status of the
simulated game by pretending that Breaker claimed one free element
from both $F_v$ and $F_u$, for each edge $vu$ occupied in Breaker's
last move. Maker then identifies a free active box $F_v$ having
highest danger value in the simulated game (breaking ties
arbitrarily). If there is no such box, Maker proceeds to the second
stage of the strategy. Otherwise, let $F_v$ be such a box. Maker
claims one free element from $F_v$, and selects $v$ as the exposure
vertex. Let $\sigma: [m]\rightarrow U_v$ be an arbitrary permutation
on $U_v$, where $m:=|U_v|$. Maker starts tossing a biased coin for
vertices in $U_v$, independently at random, according to the
ordering of $\sigma$.
\begin{enumerate} [$(a)$]
\item If there were no successes, then Maker declares this turn
as a \emph{failure of type I}, thereby incrementing $f_{I}(v)$, and
skips his move in the original game. Maker then claims $\lceil\frac
p2 \cdot |F_v|\rceil - 1$ additional free elements from $F_v$ (or
all the remaining free elements of $F_v$ if there are not enough
such elements) in the simulated game, and updates $U_v:=\emptyset$,
and $U_{\sigma(i)}:=U_{\sigma(i)} \setminus \{v\}$ for each $i\le
m$.
\end{enumerate}
Assume that Maker's first success has happened at the $k$th coin tossing.
\begin{enumerate} [$(a)$]
\setcounter{enumi}{1}
\item If the edge $v\sigma(k)$ is not free, then Maker declares $v\sigma(k)$
as a \emph{failure of type II}, increments $f_{II}(v)$ by one, and
skips his move in the original game. Maker then updates
$U_v:=U_v\setminus \{\sigma(i): i\le k\}$, and
$U_{\sigma(i)}:=U_{\sigma(i)} \setminus \{v\}$ for each $i\le k$.
\item Otherwise, Maker claims the edge $v\sigma(k)$. In this case Maker also
claims a free element from box $F_{\sigma(k)}$ and then updates
$U_v:=U_v\setminus \{\sigma(i): i\le k\}$, and
$U_{\sigma(i)}:=U_{\sigma(i)} \setminus \{v\}$ for each $i\le k$.
\end{enumerate}

\textbf{Stage 2:} In this stage, there are no free active boxes. Let
$U := \{vu: v\in V(G), u\in U_v\}$. For
each $e=vu\in U$, Maker declares a failure of type II on both $u$ and $v$
(i.e., increments both $f_{II}(u)$ and $f_{II}(v)$ by one) with probability
$p$, independently at random. After the end of this stage,
Maker stops playing the game altogether, and skips all his subsequent moves.

We now prove that by following $S_M$, Maker typically achieves his
goal. For the sake of notation, at any point during the game, we
denote by $d_M(v)$ and $d_B(v)$ the degrees of $v$ in the subgraphs
currently occupied by Maker and Breaker, respectively. The proof
will follow from the next four claims.

In the first claim, we prove that no box in the simulated game is ever
exhausted of free elements.  This implies that Maker is always able to
effectively simulate all the moves in the original game, that is,
the moves from Breaker (which in the simulated game causes two
elements to be claimed), and also the moves from Maker. In particular,
whenever a failure of type I occurs, Maker will claim exactly
$\lceil \frac{p}{2} \cdot |F_v|\rceil-1$ additional free elements
from the relevant box, as Maker's strategy for Stage 1 (case (a))
dictates.
\begin{claim}
\label{claim1}
At any point during the first stage, we have $w_M(F_v)< 1+(1+2p)d_G(v)$
and $w_B(F_v) \le d_G(v)$ for every box $F_v$ in the simulated game.
In particular, $w_M(F_v) + w_B(F_v) \le 4d_G(v)$ thus no box is ever
exhausted of free elements.
\end{claim}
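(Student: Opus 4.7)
The plan is to prove both bounds by direct bookkeeping: I will trace every rule in Maker's strategy $S_M$ that can change $w_B(F_v)$ or $w_M(F_v)$, relying on the invariant that each edge of $G$ is exposed at most once. This invariant follows from the symmetric updates $U_v:=U_v\setminus\{\sigma(i):i\le k\}$ and $U_{\sigma(i)}:=U_{\sigma(i)}\setminus\{v\}$ performed in each of Cases (a), (b), (c), which together ensure that once the coin for an edge $uv$ has been tossed (or the edge passed over during $v$'s exposure), $uv$ is removed from both $U_u$ and $U_v$.

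The bound $w_B(F_v)\le d_G(v)$ is immediate from the strategy: the only rule touching Breaker's counter is the simulation step at the start of Maker's turn, which marks one element of $F_v$ as Breaker's for each edge $vu$ just occupied by Breaker in the original game. Since Breaker's graph contains at most $d_G(v)$ edges at $v$ in total, the bound follows.

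For $w_M(F_v)$, I would split Maker's claims into three disjoint sources and bound each. First, the potential single Case (a) event at $v$: this can occur at most once because $U_v$ is reset to $\emptyset$ afterwards, and it contributes at most $\lceil \tfrac{p}{2}|F_v|\rceil=\lceil 2p\,d_G(v)\rceil$ elements. Second, Case (b) events at $v$, each contributing a single element (the initial claim made before the coins are tossed). The invariant implies that the triggering edge $v\sigma(k)$ must be Breaker's (a Maker-claimed edge $v\sigma(k)$ would already have been removed from $U_v$), so distinct Case (b) events at $v$ identify distinct Breaker edges at $v$; their total number is thus at most the current $d_B(v)$. Third, Case (c) events, where each Maker-claimed edge $uv$ contributes exactly one element to $F_v$, either as the initial claim when $v$ is the exposure vertex, or from the side-claim ``Maker also claims a free element from $F_{\sigma(k)}$'' when $u$ is; summing over all Maker-claimed edges at $v$ gives a total of $d_M(v)$. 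Adding the three sources and invoking $d_M(v)+d_B(v)\le d_G(v)$ together with the strict inequality $\lceil x\rceil<x+1$ yields $w_M(F_v)<1+(1+2p)\,d_G(v)$.

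The ``in particular'' clause is then a short computation: with $|F_v|=4d_G(v)$, the two bounds combine to $w_M(F_v)+w_B(F_v)<1+(2+2p)\,d_G(v)\le 4d_G(v)$, where the last inequality follows once $d_G(v)\ge 1/(2(1-p))$, a condition comfortably guaranteed for large $n$ by the hypothesis $\delta(G)\ge 10\ln n/(\eps p)$. I do not foresee a conceptual obstacle; the only care needed is to set up the ``each edge is exposed at most once'' invariant carefully up front, since both the Case (b) count and the Case (c) decomposition of $d_M(v)$ depend on it.
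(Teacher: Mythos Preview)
Your approach is essentially the paper's: decompose $w_M(F_v)$ into the contributions from Case~(a), Case~(b), and Case~(c), bound $f_I(v)\le 1$, and use $d_M(v)+f_{II}(v)\le d_G(v)$ (which you phrase as $d_M(v)+d_B(v)\le d_G(v)$ after arguing that each Case~(b) edge is Breaker's). The final arithmetic is identical.

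There is, however, one genuine gap. Your justification that Case~(a) occurs at most once is ``because $U_v$ is reset to $\emptyset$ afterwards''. This does not do the job: the strategy selects the exposure vertex by choosing a \emph{free active box} of maximal danger, and nowhere requires $U_v\ne\emptyset$. If $F_v$ remained free and active after the first Case~(a), then $v$ could be selected again; with $U_v=\emptyset$ the empty exposure has no successes, so Case~(a) fires once more and $f_I(v)$ increments. Thus $U_v=\emptyset$ by itself does not bound $f_I(v)$.

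The correct argument is the one the paper gives: after the first Case~(a), Maker has attempted to claim $1+(\lceil\tfrac{p}{2}|F_v|\rceil-1)=\lceil\tfrac{p}{2}|F_v|\rceil$ elements of $F_v$ in that turn. Either there were enough free elements, in which case $w_M(F_v)\ge \lceil\tfrac{p}{2}|F_v|\rceil\ge \alpha|F_v|$ and $F_v$ is \emph{inactive}; or there were not, in which case $F_v$ is no longer \emph{free}. In both cases $v$ is never selected as exposure vertex again, so $f_I(v)\le 1$. Once you replace your sentence with this reasoning, the rest of your proof goes through and matches the paper.
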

\begin{proof}
Clearly $w_B(F_v) = d_B(v) \le d_G(v)$, and $d_M(v) + f_{II}(v) \le
d_G(v)$. Moreover, $w_M(F_v)=d_M(v) + \lceil \frac{p}{2}|F_v|\rceil
f_I(v) + f_{II}(v)$. We claim that $f_I(v) \le 1$. This is true
because otherwise $F_v$ would still have free elements after the
first failure of type $I$ on $v$, and hence Maker would have claimed
at least $\lceil \frac{p}{2} \cdot |F_v|\rceil$ elements from $F_v$.
This is a contradiction, because $F_v$ would then be inactive,
and thus Maker will never play on $v$ again, which implies $f_{I}(v)
\le 1$. Therefore $w_M(F_v)<1+d_G(v) + \frac{p}{2}\cdot |F_v| \le
1+(1+2p)d_G(v)$, as required.
\end{proof}

\begin{claim} \label{claim2}
For every $v\in V(G)$, $F_v$ becomes inactive before $d_B(v)\ge
\eps d_G(v)/4$.
\end{claim}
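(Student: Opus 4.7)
The plan is to combine the invariant provided by the MinBox theorem (\thmref{thm:MinBoxGame}) with the choices of parameters in the simulated game $MinBox(n, 4\delta(G), p/2, 2b)$. Since Maker plays this simulated game according to strategy $S$, the theorem guarantees that every \emph{active} box $F_v$ satisfies $\danger(F_v) \le 2b(\ln n + 1)$ throughout the game (using bias $2b$, since each edge claimed by Breaker in the original game translates to two elements claimed in the simulated game, one in $F_u$ and one in $F_v$).

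Next, I would unpack what "active" means for $F_v$: namely $w_M(F_v) < \alpha|F_v| = (p/2)\cdot 4 d_G(v) = 2p\, d_G(v)$. Plugging this into the danger inequality gives
\[
  w_B(F_v) \le 2b\cdot w_M(F_v) + 2b(\ln n+1) < 4bp\, d_G(v) + 2b(\ln n + 1)
\]
at every moment when $F_v$ is still active. A key observation that I would state explicitly is that $w_B(F_v) = d_B(v)$: each edge $vu$ claimed by Breaker triggers one element to be claimed in $F_v$ in the simulated game, and conversely every element claimed by the simulated "Breaker" in $F_v$ corresponds to such an edge.

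Finally, I would substitute the hypotheses of \thmref{thm:main}. From $b \le \eps/(20p)$ we get $4bp \le \eps/5$, hence $4bp\, d_G(v) \le \eps d_G(v)/5$. From $d_G(v) \ge \delta(G) \ge 10\ln n/(\eps p)$ we get $\ln n \le \eps p\, d_G(v)/10$, so for $n$ large enough
\[
  2b(\ln n + 1) \le \frac{\eps(\ln n + 1)}{10 p} \le \frac{\eps \ln n}{5p} \le \frac{\eps^2 d_G(v)}{50} \le \frac{\eps d_G(v)}{50}.
\]
Adding these two contributions gives $d_B(v) = w_B(F_v) < \eps d_G(v)/5 + \eps d_G(v)/50 = 11\eps d_G(v)/50 < \eps d_G(v)/4$ for every moment when $F_v$ is still active. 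Contrapositively, if $d_B(v)$ ever reaches $\eps d_G(v)/4$, then $F_v$ must already be inactive, which is exactly the claim.

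There is no real obstacle beyond bookkeeping: the arithmetic balancing of $4bp\, d_G(v)$ against $\eps d_G(v)/4$ is exactly why the bias was chosen as $\lfloor \eps/(20p)\rfloor$, and the logarithmic term $2b(\ln n+1)$ is exactly why the minimum-degree hypothesis $\delta(G) \ge 10\ln n/(\eps p)$ was imposed. The only thing I would double-check carefully is the identification $w_B(F_v) = d_B(v)$, making sure that Maker's updates in Stage 1 (where he pretends Breaker played on both $F_u$ and $F_v$ for each edge $vu$) are consistent and that no elements are double-counted.
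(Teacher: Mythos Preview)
Your proof is correct and takes essentially the same approach as the paper: both combine the danger bound from \thmref{thm:MinBoxGame} with the definition of ``active'' and the numerical choices of $b$ and $\delta(G)$; the paper merely arranges the same inequality as a contradiction argument (assuming $F_v$ active and $d_B(v)\ge\eps d_G(v)/4$, then deducing $w_M(F_v)>\tfrac p2|F_v|$). One small point the paper makes explicit that you gloss over: Maker does not follow strategy $S$ \emph{exactly} in the simulated game---in cases $(a)$ and $(c)$ of Stage~1 he claims extra elements---so the appeal to \thmref{thm:MinBoxGame} needs the observation that additional Maker moves can only help, i.e., the bias monotonicity of the MinBox game ensures the danger invariant still holds.
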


\begin{proof}
Let $v\in V(G)$ be any vertex of $V(G)$. Note that in the simulated
game, Maker always claims a free element from one free active box having
highest danger value. This, however, does not imply that Maker exactly follows
the strategy described in \thmref{thm:MinBoxGame}, as he might occasionally
claim more than one free element when a failure of type I occurs.
Nonetheless, we claim that the assertion of \thmref{thm:MinBoxGame} still
holds in this case because of the following reason. If Maker has a winning
strategy in a $(1:b)$ Maker-Breaker game, then he also has a winning strategy
in a game in which he is occasionally allowed to claim more than one position
per move. This is due to the monotonic nature of these types of games (recall
that $MinBox$ is a Maker-Breaker game). Hence, by
\thmref{thm:MinBoxGame}, we have
\begin{equation}
  \label{eq:danger}
  \danger(F_v) = w_B(F_v)-2b\cdot w_M(F_v)\le 2b(\ln n+1)
\end{equation}
for every active box $F_v$. Assume that there exists a vertex $v\in
V(G)$ for which $F_v$ is still active and $w_B(F_v)=d_B(v)\ge
\eps d_G(v)/4$. Recall that $b=\lfloor\frac{\eps}{20p}\rfloor$, and
by \eqref{eq:danger} it follows that
\begin{align*}
  w_M(F_v) \ge \frac{w_B(F_v)}{2b} - (\ln n + 1) \ge
  \frac 52 d_G(v)p-(\ln n+1).
\end{align*}
By the assumption that $\delta(G)\ge \frac{10\ln n}{\eps p}$,
we conclude that $w_M(F_v) > 2 d_G(v)p= \frac p2 |F_v|$.
However, since we assumed that $F_v$ is active in $MinBox(n,
4\delta(G), p/2, 2b)$, we must have $w_M(F_v)\le \frac p2 |F_v|$,
which is a contradiction.
\end{proof}

\begin{claim} \label{claim3}
Asymptotic almost surely all edges of $G'$ are exposed before
Stage 2.
\end{claim}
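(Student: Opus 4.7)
The plan is to prove the stronger statement that a.a.s. $U = \emptyset$ at the start of Stage 2, i.e., Maker exposes every edge of $G$ during Stage 1; this trivially implies \clref{claim3}. For each vertex $v$ I will distinguish two cases according to the final value of $f_I(v)$. If $f_I(v) = 1$, then case (a) of Maker's strategy explicitly sets $U_v := \emptyset$ at that moment, and since $U_v$ is monotonically non-increasing, $U_v$ remains empty for the rest of the game. The task thus reduces to ruling out $f_I(v) = 0$ for any $v \in V(G)$ a.a.s.

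For the second case ($f_I(v) = 0$), the identity from the proof of \clref{claim1} simplifies to $w_M(F_v) = T_v^{bc} + I_v$, where $T_v^{bc}$ counts the rounds with $v$ as exposure vertex ending in a success (cases (b) and (c)) and $I_v$ counts the rounds with another exposure vertex $u$ in which Maker actually claimed the edge $uv$ (case (c) with $v = \sigma(k)$). I will argue that each of these $T_v^{bc}+I_v$ contributions is charged to a \emph{distinct} heads edge at $v$: every successful round reveals exactly one heads edge, and no edge is exposed twice during the game, so $T_v^{bc} + I_v \le d_{G'}(v)$. For Stage 2 to begin every box must be inactive, i.e., $w_M(F_v) \ge \alpha |F_v| = 2p\,d_G(v)$, which combined with the previous bound forces the necessary condition $d_{G'}(v) \ge 2p\,d_G(v)$.

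Finally, $d_{G'}(v) \sim \Bin(d_G(v),p)$ has mean $p\,d_G(v) \ge 10\ln n / \eps$, so Chernoff's inequality (\lemref{Che} with $a = 1$) yields $\Prob[d_{G'}(v) \ge 2p\,d_G(v)] \le \exp(-p\,d_G(v)/3) = o(n^{-1})$, and a union bound over $V(G)$ shows that a.a.s. $d_{G'}(v) < 2p\,d_G(v)$ for every $v$ simultaneously. This contradicts the condition derived in the previous paragraph, so a.a.s. $f_I(v) = 1$ everywhere and hence $U = \emptyset$ when Stage 2 begins. The one delicate step will be the bookkeeping inequality $T_v^{bc} + I_v \le d_{G'}(v)$: I need to verify carefully that rounds ending in a type II failure from another exposure vertex $u$ (with $v = \sigma(k)$) increment neither $T_v^{bc}$ nor $I_v$, so there is no double-counting and each contribution really corresponds to a distinct edge of $G'$ incident to $v$; once this bookkeeping is straight, the rest is just Chernoff and a union bound.
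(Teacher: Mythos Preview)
Your proposal is correct and follows essentially the same approach as the paper. The paper argues by contradiction: if $U_v\neq\emptyset$ at the start of Stage~2 then $f_I(v)=0$, whence (using the identity from \clref{claim1}) $w_M(F_v)=d_M(v)+f_{II}(v)$; since $F_v$ is inactive this forces $d_{G'}(v)\ge d_M(v)+f_{II}(v)\ge 2p\,d_G(v)$, and Chernoff plus a union bound finish. Your quantities $T_v^{bc}+I_v$ are exactly $d_M(v)+f_{II}(v)$, and your careful bookkeeping that each such contribution corresponds to a distinct heads edge at $v$ is precisely the justification the paper leaves implicit in the line ``$d_{G'}(v)\ge d_M(v)+f_{II}(v)$''.
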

\begin{proof}
Suppose there exists a vertex $v$ at the beginning of the second
stage, such that $U_v \ne \emptyset$. Since $U_v\ne \emptyset$, we
must have $f_{I}(v) = 0$. Moreover, because $F_v$ is not active, we
must also have $w_M(F_v) = d_M(v)+f_{II}(v) \ge \frac p2
|F_v|=2d_G(v)p$. This implies that $d_{G'}(v)\ge d_M(v)+f_{II}(v)\ge
2 d_G(v)p$. Now, since $d_{G'}(v)\sim \Bin(d_G(v),p)$, using
Chernoff's inequality, it follows that
\[
  \Prob[\Bin(d_G(v), p) \ge 2d_G(v)p] \le e^{-d_G(v)p/3} =
  o\left(\frac{1}{n}\right).
\]
Applying the union bound, it thus follows that with probability
$1-o(1)$, there exists no such vertex, proving the claim.
\end{proof}

\begin{claim} \label{claim4}
Asymptotically almost surely, for every $v\in V(G)$ we have
$f_{II}(v)\le \frac{9}{10}\eps d_G(v)p$.
\end{claim}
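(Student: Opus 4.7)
I would split $f_{II}(v)=f_{II}^{(1)}(v)+f_{II}^{(2)}(v)$ along the two stages of Maker's strategy. The Stage~2 contribution is immediately handled by \clref{claim3}: a.a.s.\ every edge of $G'$ is already exposed before Stage~2 begins, so each coin toss performed in Stage~2 comes up failure and no type~II failure is declared in Stage~2. It therefore suffices to prove that a.a.s.\ $f_{II}^{(1)}(v)\le \tfrac{9}{10}\eps d_G(v)p$ for every $v\in V(G)$.

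Each type~II failure at $v$ in Stage~1 occurs when $v$ is the exposure vertex, the first coin success lands on some $v\sigma(k)$, and the edge $v\sigma(k)$ is already Breaker's. Hence each such failure consumes a distinct edge $vu$ with $vu\in G'$ whose other endpoint is a Breaker's neighbor of $v$ at the time of the exposure. Since Maker selects $v$ as exposure vertex only while the box $F_v$ is active, \clref{claim2} bounds the set
\[
  B_v := \{u : vu \text{ is Breaker's at some time when } F_v \text{ is still active}\}
\]
by $|B_v|<\eps d_G(v)/4$. Denoting by $Z_{vu}\in\{0,1\}$ the outcome of the (unique) coin toss performed on the edge $vu$ during the game, we obtain the pointwise bound $f_{II}^{(1)}(v)\le |\{u\in B_v: Z_{vu}=1\}|$.

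To control the right-hand side, I would use coin independence plus Chernoff. The key observation is that if $u\in B_v$ and $Z_{vu}=1$, then $vu$ must have been unexposed at the time Breaker claimed it, since otherwise the coin on $vu$ would have come up success while $vu$ was still free, so Maker would have claimed $vu$, contradicting $u\in B_v$. Thus, revealing events in chronological order, Breaker's decision to claim $vu$ is made strictly before $Z_{vu}$ is revealed, so conditionally on the history up through Breaker's claim of $vu$, the variable $Z_{vu}$ is a fresh Bernoulli$(p)$ random variable independent of the event $\{u\in B_v\}$. A standard conditional Chernoff/stochastic-domination argument then shows that $|\{u\in B_v:Z_{vu}=1\}|$ is stochastically dominated by $\textup{Bin}(\eps d_G(v)/4,p)$, and by \lemref{l:Che},
\[
  \Pr\!\left[\textup{Bin}\!\left(\eps d_G(v)/4,\,p\right)\ge \tfrac{9}{10}\eps d_G(v)p\right]
  \le \left(\tfrac{5e}{18}\right)^{(9/10)\,\eps d_G(v)p} = o(1/n),
\]
using $\eps d_G(v)p\ge 10\ln n$ (from hypothesis $(iii)$ of \thmref{thm:main}). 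A union bound over $v\in V(G)$ finishes the argument.

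The main obstacle is making the decoupling in the third paragraph rigorous: $B_v$ is a random set defined through the game dynamics, which themselves depend on the coin tosses, so a direct Chernoff application is not valid. The precise setup must expose the coins in the chronological order in which their edges are first attempted, and exploit the fact that on $\{u\in B_v,\ Z_{vu}=1\}$ Breaker's claim on $vu$ is measurable with respect to the $\sigma$-algebra strictly before the toss of $Z_{vu}$ --- the rest is a routine martingale-style conditional Chernoff bound.
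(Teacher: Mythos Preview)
Your proposal is correct and follows essentially the same approach as the paper: use \clref{claim3} to dispose of Stage~2, use \clref{claim2} to cap at $\eps d_G(v)/4$ the number of Breaker-edges at $v$ that could ever produce a type~II failure, then argue that $f_{II}(v)$ is stochastically dominated by $\Bin(\eps d_G(v)/4,p)$ and apply \lemref{l:Che} with a union bound. If anything, you are more careful than the paper, which simply asserts the stochastic domination; your observation that any $u\in B_v$ with $Z_{vu}=1$ must have been unexposed when Breaker claimed $vu$ (so $Z_{vu}$ is a fresh Bernoulli$(p)$ at that moment) is exactly the justification the paper leaves implicit, and the sequential coupling you sketch is indeed routine.
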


\begin{proof}
Let $v\in V(G)$ be any vertex. By \clref{claim2}, during Stage 1
Breaker can touch $v$ at most $\eps d_G(v)/4$ times before
$F_v$ becomes inactive. Moreover, by \clref{claim3}, with
probability $1-o(1)$ all the edges of $G'$ were exposed before
the beginning of Stage 2. Since a failure of type II in Stage 1
is equivalent to Maker having a success on one of Breaker's
edges, it follows that $f_{II}(v)$ is stochastically dominated
by $\Bin(m,p)$, where $m = \eps d_G(v)/4$.
Applying \lemref{l:Che} to $f_{II}(v)$ we conclude that the
probability for having more than $\eps d_G(v)p$ edges $vu$
which are failures of type II is at most
\[
  \Prob\left[\Bin(\eps d_G(v)/4,p)\ge \frac{9}{10}\eps d_G(v)p\right]\le
  \left(\frac{e\eps d_G(v)p/4}{\frac{9}{10}\eps d_G(v)p}\right)^{
  \frac{9}{10}\eps d_G(v)p}=
  o\left(\frac 1n\right).
\]
Applying the union bound we obtain that the probability that there
is such a vertex is $o(1)$. Therefore, \aas $f_{II}(v) \le
\frac{9}{10}\eps d_G(v)p$ for all $v\in V(G)$.
\end{proof}

This completes the proof of \thmref{thm:main}.
\end{proof}

\section{Applications}
\label{section:applications}

In this section we show how to apply \thmref{thm:main} in order
to prove Theorems~\ref{thm:app1}, \ref{thm:app2} and \ref{thm:appTrees}.
We also derive a directed graph analog of \thmref{thm:main}.
We start with proving \thmref{thm:app1}, which states that Maker
can win the Hamiltonicity game played on $E(K_n)$ against an asymptotically
optimal (up to a constant factor) bias of Breaker.

\begin{proof}[Proof of \thmref{thm:app1}]
Let $C_1=C(\frac 16)$ be as in \thmref{thm:resHam}, and let
$C_2:=\max\{C_1,1000\}$. First, observe that for $p\ge \frac{C_2\ln
n}{n}$ \aas we have that $G\sim \gnp(n,p)$ satisfies $\delta(G)\ge
\frac 56 np$ (this follows immediately from Chernoff's inequality and the union
bound). Next, note that the property $\mc P:=$``being
Hamiltonian" is $(K_n,p,1/6)$ resilient for $p\ge \frac{C_2\ln
n}{n}$. Indeed, let $H\subseteq G$ be a subgraph for which
$d_H(v)\le \frac 16 d_G(v)$. Observe that in $G':=G-H$ we have
$d_{G'}(v)\ge \frac 56 d_G(v)$. Now, since \aas $\delta(G)\ge
\frac 56 np$, it follows that $\delta(G')\ge \frac{25}{36}np>\frac
23 np$. Therefore, by our choice of $C_2$ and
\thmref{thm:resHam}, it follows that $G'$ is Hamiltonian.

Lastly, applying \thmref{thm:main} with
$\eps=\frac{1}{100}$ (recall that we have an upper bound for $\varepsilon$), $K_n$ (as the host graph $G$),
$p=\frac{C_2\ln n}{n}$ and $\mc P$, we obtain that Maker has a
winning strategy in the $(1:\lfloor\frac{1}{120p}\rfloor)$ game
$\mc P(K_n)$. Note that $\frac{1}{120p}=\frac{n}{120 C_2\ln n}$,
and therefore, by setting $\alpha:=\frac{1}{120C_2}$ we complete
the proof.
\end{proof}

Next, we prove \thmref{thm:app2}

\begin{proof}[Proof of \thmref{thm:app2}]
Let $p=\omega(n^{-1/2})$ and note that by \thmref{thm:resPan}, it
follows that the property $\mc P:=$``being pancyclic" is
$(K_n,p,1/2+o(1))$-resilient. Therefore, by applying \thmref{thm:main}
with $\eps=1/100$, $K_n$ (as the host graph), $p$ and $\mc P$,
we obtain that Maker has a winning strategy in the
$(1:\lfloor\frac{1}{60p}\rfloor)$ game
$\mc P(K_n)$. This completes the proof.
\end{proof}

We turn to prove \thmref{thm:appTrees}.

\begin{proof}[Proof of \thmref{thm:appTrees}]
Let $\alpha>0$ and $\Delta>0$ be two positive constants. Let
$\eps>0$ and $C_0$ be as in \thmref{thm:resTrees}
(applied to $\alpha$ and $\Delta$). Let $C_1\ge
\max\left\{C_0,\frac{20}{\eps}\right\}$ be a large enough
constant for which $G\sim \gnp(n,p)$ \aas satisfies $\Delta(G)\le
(1+\eps)np$, provided that $p = \frac{C_1\ln n}{n}$. Let
$\mc T$ be the set of all trees $T$ on $n$ vertices satisfying:
\begin{enumerate}[(i)]
\item $\Delta(T)\le \Delta$, and
\item $T$ contains a bare path of length at least $\alpha n$,
\end{enumerate}
and let $\mc P$ be the property ``being
$\mc T$-universal" (that is, contains copies of all trees in
$\mc T$). Observe that $\mc P$ is $\left(K_n, p,
\frac{\eps}{1+\eps}\right)$ resilient, and hence
$\left(K_n, p, \frac{\eps}{2}\right)$ resilient for
$p= \frac{C_1 \ln n}{n}$. Indeed, let $H$ be a subgraph of $G$ for
which $d_H(v) \le\frac{\eps}{1+\eps}\cdot d_G(v)$, for
all vertices $v\in V(G)$. Thus $d_H(v)
\le\frac{\eps}{1+\eps}\cdot \Delta(G) \le \eps
n p$. Therefore, by \thmref{thm:resTrees}, $G':=G\setminus H$
satisfies $\mc P$.

Lastly, by applying \thmref{thm:main} with $\min\{\eps/2,1/100\}$ (as
$\eps$), $K_n$ (as the host graph $G$), $p$, and $\mc P$,
we obtain that Maker has a winning strategy in the
$(1:\lfloor\frac{\eps}{40p}\rfloor)$ game $\mc P(K_n)$. By setting
$\delta=\frac{\eps}{40C_1}$, we complete the proof. Note
that we used the fact that $C_1\ge \frac{20}{\eps}$ in order
to verify assumption (iii) in \thmref{thm:main}.
\end{proof}

As a last application, we establish an analog of \thmref{thm:main}
to directed graphs. A \emph{directed} graph $D$ consists of a
set of vertices $V(D)$, and a set of \emph{arcs} (or \emph{directed}
edges) $E(D)$ composed of elements of the form $(u,v)\in
V(D)\times V(D)$, where $u\neq v$. For a directed graph $D$ and a
vertex $v\in V(D)$ we let $d^+(v)$ and $d^-(v)$ denote the out- and
in- degrees of $v$, respectively. Furthermore, we define
$\delta^+(D)$ and $\delta^-(D)$ to be the minimum out- and in-
degrees of $D$, respectively, and set
$\delta^0(D)=\min\{\delta^+(D),\delta^-(D)\}$. Analogously to
graphs, we define $\dnp(D,p)$ to be the model of random
sub-directed graphs of $D$ obtained by retaining each arc of $D$
with probability $p$, independently at random. We write $\dnp(n,p)$
for $\dnp(D,p)$ in the special case where $D$ is the
complete directed graph on $n$ vertices. That is, $V(D)=[n]$ and
$E(D)$ consists of all the possible arcs. Similarly as in
\defref{PisRes}, for a monotone increasing directed graph
property $\mc P$, we say that $\mc P$ is $(D,p,r)$-\emph{resilient}
if the local resilience of $D'\sim \dnp(D,p)$ with respect to
$\mc P$ is at least $r$, where here we mean that by deleting at
each vertex $v$ at most $r\cdot d^+_D(v)$ out- and $r\cdot d^-_D(v)$
in-edges one can obtain a directed graph not having $\mc P$.

\begin{theorem}\label{thm:mainDirected}
For every constant $0<\eps\leq 1/100$ and a sufficiently large
integer $n$ the following holds. Suppose that
\begin{enumerate} [$(i)$]
\item $0< p=p(n)< 1$,
\item $D$ is a directed graph with $|V(D)|=n$,
\item $\delta^{0}(D)\ge \frac{10\ln n}{\eps p}$, and
\item $\mc P$ is a monotone increasing directed graph
property which is $(D,p,\eps)$-resilient.
\end{enumerate}
Then Maker has a winning strategy in the $(1:
\lfloor\frac{\eps}{20p}\rfloor)$ game $\mc P(D)$.
\end{theorem}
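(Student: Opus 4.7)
The plan is to mirror the proof of \thmref{thm:main} closely, with the only structural change being that each vertex contributes two boxes to the simulated MinBox game: one for its out-arcs and one for its in-arcs. Maker will play the same type of random strategy, but now on the arc set $E(D)$: he maintains unexposed sets $U_v^+, U_v^- \subseteq V(D)$ for each vertex, picks an exposure vertex $v$ together with a direction $\sigma \in \{+,-\}$, and then tosses independent $\mathrm{Ber}(p)$ coins on arcs connecting $v$ in direction $\sigma$ one by one, claiming the first successful arc if it is free (type II failure if it belongs to Breaker, type I failure if no coin succeeds). The resulting Maker subgraph $M$ will be (\aas) a large subdigraph of a random digraph $D' \sim \dnp(D,p)$, and the $(D,p,\eps)$-resilience of $\mc P$ will do the rest.

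For the bookkeeping, I would set up boxes $F_v^+$ and $F_v^-$ of sizes $4 d_D^+(v)$ and $4 d_D^-(v)$ respectively, and run the simulated game $MinBox(2n,\,4\delta^0(D),\,p/2,\,2b)$. Before each of Maker's moves, for each arc $(u,v)$ claimed by Breaker on the previous move we credit Breaker with one free element in $F_u^+$ and one in $F_v^-$ (so Breaker's effective bias in the simulated game is $2b$, exactly as before). Maker then selects a free active box $F_v^\sigma$ with maximum danger value, sets $v$ as exposure vertex and $\sigma$ as direction, and proceeds with the coin-tossing. A type II failure on an arc $(v,u)$ (respectively $(u,v)$) is charged as an incremented failure count to both $F_v^+$ and $F_u^-$ (respectively $F_u^+$ and $F_v^-$), keeping the directional accounting symmetric.

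With this setup, the analogs of Claims~\ref{claim1}--\ref{claim4} carry over essentially verbatim. The only cosmetic change is that \thmref{thm:MinBoxGame} is applied with $2n$ boxes instead of $n$, which replaces $\ln n$ by $\ln(2n) = \ln n + O(1)$ in the danger-value bound; this is absorbed by the constants in the hypothesis $\delta^0(D) \ge \frac{10 \ln n}{\eps p}$. In particular, the analog of Claim~\ref{claim4} yields \aas
\[
  f_{II}^+(v) \le \tfrac{9}{10}\eps\, d_D^+(v)\, p
  \quad\text{and}\quad
  f_{II}^-(v) \le \tfrac{9}{10}\eps\, d_D^-(v)\, p
\]
for every $v \in V(D)$, while the Chernoff lower bound gives $d_{D'}^\pm(v) \ge \tfrac{9}{10} d_D^\pm(v) p$ \aas. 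Hence Maker's subdigraph $M$ satisfies $d_M^\pm(v) \ge (1-\eps)\, d_{D'}^\pm(v)$ for every $v$, and the $(D,p,\eps)$-resilience of $\mc P$ forces $M \in \mc P$ \aas; determinism of Maker-Breaker then upgrades this to a deterministic winning strategy.

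The one place I expect to need some care, rather than a new idea, is verifying that the directional charging is self-consistent: when the exposure vertex is $v$ in direction $\sigma$, the unexposed sets $U_u^{-\sigma}$ of all relevant neighbours $u$ must be updated correctly (even when Maker skips his move in the original game due to a failure), and the simultaneous control of $2n$ boxes by the single strategy~$S$ of \thmref{thm:MinBoxGame} has to be justified. Once this bookkeeping is pinned down, no new probabilistic or game-theoretic ingredient beyond those already used for \thmref{thm:main} is required.
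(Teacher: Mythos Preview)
Your approach is correct, but the paper takes a much shorter and cleaner route. Rather than re-running the entire proof of \thmref{thm:main} in the directed setting, the paper simply \emph{reduces} to \thmref{thm:main}: given $D$, it builds the bipartite graph $G_D$ on two disjoint copies $A,B$ of $V(D)$, with $ab\in E(G_D)$ iff $(a,b)\in E(D)$, observes that the game $\mc P(D)$ is exactly the game $\mc P'(G_D)$ for the naturally induced bipartite property $\mc P'$, and that $(D,p,\eps)$-resilience of $\mc P$ is the same as $(G_D,p,\eps)$-resilience of $\mc P'$. One then applies \thmref{thm:main} to $G_D$ as a black box.

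Your two boxes $F_v^+, F_v^-$ are precisely the boxes $F_{v_A}, F_{v_B}$ that \thmref{thm:main} creates for the $2n$ vertices of $G_D$; your whole argument is the unwound version of this reduction. The reduction buys you brevity and avoids rechecking Claims~\ref{claim1}--\ref{claim4} (and the bookkeeping you flagged about synchronising $U_u^{-\sigma}$ updates disappears, since it is just the undirected update in $G_D$). Your direct approach buys nothing extra here, though it would be the right move if one needed finer control than \thmref{thm:main} provides.
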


\begin{proof} For a directed graph $D$ one can define the following
bipartite graph $G_D$: the parts of $G_D$ are two disjoint copies of
$V(D)$, denoted by $A$ and $B$. For any $a\in A$ and $b\in B$, the
(undirected) edge $ab$ belongs to $E(G_D)$ if and only if the
directed edge $ab$ belongs to $E(D)$. Note that the mapping
$D\rightarrow G_D$ is an injection from the set of all directed
graphs on $n$ vertices to the set of bipartite graphs with two parts
of size $n$ each, and apply \thmref{thm:main} to $G_D$ in the
obvious way. Note that the property $\mc P$ of digraphs
naturally translates to a property $\mc P'$ of bipartite graphs
which is $(G_D,p,\eps)$-resilient.
\end{proof}


\textbf{Acknowledgment.} The authors wish to thank the anonymous
referees for many valuable comments.

\appendix

\section{Proofs of \thmref{thm:MinBoxGame} and \lemref{lem:boosters}}

We begin with the proof of the $MinBox$ game.
\begin{proof}[Proof of \thmref{thm:MinBoxGame}]
The proof of this theorem is very similar to the proof of Theorem
1.2 in \cite{GS}. Since claiming an extra element is never a
disadvantage for any of the players, we can assume that Breaker is
the first player to move. For a subset $X$ of boxes, let
$\avdanan(X)=\frac{\sum_{F\in X}\danger(F)}{|X|}$ denote the average
danger of the boxes in $X$. The game ends when there are no more
free elements left.


First we prove the upper bound for the danger values of active
boxes. Suppose, towards a contradiction, that there exists a
strategy for Breaker that ensures the existence of an active box $F$
satisfying $\danger(F)>b(\ln n+1)$ at some point during the game.
Denote the first time when this happens by $g$. Let
$I=\{F_1,\ldots,F_g\}$ be the set which defines Maker's game, i.e,
in his $i^{th}$ move, Maker plays at $F_i$ for $1\le i\le g-1$ and
$F_g$ is the first active box satisfying $\danger(F_g) > b(\ln n +
1)$. For every $0\le i\le g-1$, let $I_i=\{F_{g-i},\ldots,F_g\}$.
Following the notation of \cite{GS}, let $\danger_{B_i}(F)$ and
$\danger_{M_i}(F)$ denote the danger value of a box $F$,
\emph{directly before} Breaker's and Maker's $i^{th}$ move,
respectively. Notice that in his $g^{th}$ move, Breaker increases
the danger value of $F_g$ to more than $b(\ln n + 1)$. This is only
possible if $\danger_{B_g}(F_g)> b(\ln n+1)-b=b\ln n$.

Analogously to the proof of Theorem 1.2 in \cite{GS}, we state the
following lemmas which estimate the change of the average danger
after a particular move (by either player). In the first lemma we
estimate the changes after Maker's moves.

\begin{lemma} \label{lem:makermindeg}
Let $i$, $1\le i \le g-1$,

$(i)$ if $I_i\neq I_{i-1}$, then $\avdanan_{M_{g-i}}(I_{i}) -
\avdanan_{B_{g-i+1}}(I_{i-1}) \ge 0.$

$(ii)$ if $I_i=I_{i-1}$, then $\avdanan_{M_{g-i}}(I_{i}) -
\avdanan_{B_{g-i+1}}(I_{i-1}) \ge \frac{b}{|I_i|}.$
\end{lemma}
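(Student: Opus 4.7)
The plan is to exploit the fact that Maker's $(g-i)$-th move modifies exactly one danger value, namely $\danger(F_{g-i})$, which decreases by exactly $b$ (since $w_M(F_{g-i})$ increases by one and $\danger = w_B - b\cdot w_M$). Every other danger value is unchanged between times $M_{g-i}$ and $B_{g-i+1}$. Both parts of the lemma then reduce to bookkeeping about how this single decrement affects the averages over $I_{i-1}$ and $I_i$.

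For part $(ii)$, $I_i = I_{i-1}$ and Maker's move decreases the total danger summed over this common set by exactly $b$, which gives $\avdanan_{M_{g-i}}(I_i) - \avdanan_{B_{g-i+1}}(I_{i-1}) = b/|I_i|$ on the nose. For part $(i)$, $I_i = I_{i-1} \cup \{F_{g-i}\}$ with $F_{g-i} \notin I_{i-1}$, so none of the danger values of boxes in $I_{i-1}$ change during $M_{g-i}$ and therefore $\avdanan_{B_{g-i+1}}(I_{i-1}) = \avdanan_{M_{g-i}}(I_{i-1})$. The inequality then reduces to $\avdanan_{M_{g-i}}(I_i) \ge \avdanan_{M_{g-i}}(I_{i-1})$, which by the elementary identity
\[
  \avdanan_{M_{g-i}}(I_i) = \avdanan_{M_{g-i}}(I_{i-1}) +
  \frac{\danger_{M_{g-i}}(F_{g-i}) - \avdanan_{M_{g-i}}(I_{i-1})}{|I_i|}
\]
is equivalent to $\danger_{M_{g-i}}(F_{g-i}) \ge \avdanan_{M_{g-i}}(I_{i-1})$. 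This last inequality follows from strategy $S$: Maker selected $F_{g-i}$ as a free active box of maximal danger at time $M_{g-i}$, so $\danger_{M_{g-i}}(F_{g-i})$ dominates the maximum, and hence the average, over $I_{i-1}$, \emph{provided} all boxes in $I_{i-1}$ are free and active at time $M_{g-i}$.

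The main obstacle, and essentially the only non-routine step, will be verifying that every $F \in I_{i-1} = \{F_{g-i+1},\ldots,F_g\}$ is indeed free and active at time $M_{g-i}$. For each $1 \le j \le i-1$ Maker plays on $F_{g-i+j}$ at the later time $M_{g-i+j}$, so $F_{g-i+j}$ is free and active there; and $F_g$ is free and active at time $B_g$, since Breaker's move then claims an element of $F_g$ (so $F_g$ is free) and by definition the threshold-exceeding box is active. Both properties are monotone in time: once a box becomes inactive ($w_M(F) \ge \alpha|F|$) or has no free element it stays so, because $w_M$ is non-decreasing and claimed elements stay claimed. Hence freeness and activity at any later time imply the same at the earlier time $M_{g-i}$, which closes the one subtle point; the remainder is a direct computation.
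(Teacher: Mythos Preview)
Your proof is correct and follows essentially the same approach as the paper's: both reduce part $(ii)$ to the observation that Maker's move at $F_{g-i}$ decreases the total danger over $I_i=I_{i-1}$ by $b$, and both reduce part $(i)$ to the inequality $\danger_{M_{g-i}}(F_{g-i})\ge \max_{F\in I_{i-1}}\danger_{M_{g-i}}(F)$, which follows from strategy $S$. Your explicit verification that every box in $I_{i-1}$ is free and active at time $M_{g-i}$ (via the monotonicity of these properties and the fact that each such box is free and active at a later time) is a point the paper simply takes for granted.
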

\begin{proof}
For part $(i)$ we have that $F_{g-i}\not\in I_{i-1}$. Since danger
values do not increase during Maker's move, we have
$\avdanan_{M_{g-i}}(I_{i-1}) \ge \avdanan_{B_{g-i+1}}(I_{i-1})$.
Before $M_{g-i}$, Maker selected the box $F_{g-i}$ because its
danger was highest among the active boxes. Thus $\danger(F_{g-i})\ge
\max ( \danger(F_{g-i+1}),\ldots, \danger(F_g))$, which implies
$\avdanan_{M_{g-i}}(I_i)\ge \avdanan_{M_{g-i}}(I_{i-1})$. Combining
the two inequalities establishes part $(i)$.

For part $(ii)$ we have that $F_{g-i}\in I_{i-1}$. In $M_{g-i}$,
$w_M(F_{g-i})$ increases by $1$ and $w_M(F)$ does not change for any
other box $F\in I_{i}$. Besides, the values of $w_B(\cdot)$ do not
change during Maker's move. So $\danger(F_{g-i})$ decreases by $b$,
whereas $\danger(F)$ do not increase for any other box $F\in I_{i}$.
Hence $\avdanan(I_i)$ decreases by at least $\frac{b}{|I_i|}$, which
implies $(ii)$.
\end{proof}

In the second lemma we estimate the changes after Breaker's moves.

\begin{lemma}\label{lem:breakermindeg}
Let $i$ be an integer, $1\le i\le g-1$. Then,

\[
  \avdanan_{M_{g-i}}(I_i) - \avdanan_{B_{g-i}}(I_i) \le
  \frac{b}{|I_i|}.
\]

\end{lemma}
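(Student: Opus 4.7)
The plan is to observe that during Breaker's $(g-i)$-th move, Maker does not play, so $w_M(F)$ is frozen for every box $F$. Since $\danger(F) = w_B(F) - b\cdot w_M(F)$, the change in $\danger(F)$ across this Breaker move equals exactly the change in $w_B(F)$, which in turn equals the number of elements Breaker claims inside $F$. Note also that danger values can only increase during Breaker's move (they never decrease), because $w_B$ is monotone non-decreasing.

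The key step is then a simple accounting. The boxes of the game are pairwise disjoint, so each of the (at most) $b$ elements claimed by Breaker lies in at most one box. Summing the per-box danger increments over $F\in I_i$, I would write
\[
\sum_{F\in I_i}\bigl(\danger_{M_{g-i}}(F) - \danger_{B_{g-i}}(F)\bigr)
\;=\; \#\{\text{Breaker's claims in } B_{g-i} \text{ that land in some box of } I_i\} \;\le\; b,
\]
since the total number of Breaker's claims in this move is at most $b$. Dividing both sides by $|I_i|$ gives
\[
\avdanan_{M_{g-i}}(I_i) - \avdanan_{B_{g-i}}(I_i) \;\le\; \frac{b}{|I_i|},
\]
as required.

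There is no genuine obstacle here: the lemma is essentially a restatement of the bias bound, together with the fact that Maker's counters are untouched during Breaker's turn. The only points that need articulation are that the boxes are disjoint (so each claimed element contributes to a single $w_B$), and that claims landing outside $I_i$ (or outside all boxes, should Breaker be forced to play on an element not in any box) only make the inequality slacker, so the worst case is when every one of Breaker's $b$ claims falls into a box of $I_i$.
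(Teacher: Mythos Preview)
Your proof is correct and follows essentially the same approach as the paper: both argue that $w_M$ is unchanged during Breaker's move, so the total increase of $\sum_{F\in I_i}\danger(F)$ equals the increase of $\sum_{F\in I_i} w_B(F)$, which is at most $b$ since Breaker claims at most $b$ elements in disjoint boxes; dividing by $|I_i|$ yields the bound. Your write-up is a bit more explicit about the disjointness point, but the argument is the same.
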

\begin{proof}
The increase of $\sum_{F\in I_{i}} w_B(F)$ during $B_{g-i}$ is at
most $b$. Moreover, since the values of $w_M(F)$ for $F\in I_{i}$ do
not change during Breaker's move, the increase of $\avdanan(I_i)$
(during $B_{g-i}$) is at most $\frac{b}{|I_i|}$, which establishes
the lemma.
\end{proof}

Combining Lemmas~\ref{lem:makermindeg} and~\ref{lem:breakermindeg},
we obtain the following corollary which estimates the change of the
average danger after a full round.

\begin{corollary}\label{coro:danger-change}
Let $i$ be an integer, $1\le i\le g-1$.

$(i)$ if $I_i=I_{i-1}$, then $\avdanan_{B_{g-i}}(I_{i}) -
\avdanan_{B_{g-i+1}}(I_{i-1}) \ge 0.$

$(ii)$ if $I_i\neq I_{i-1}$, then $\avdanan_{B_{g-i}}(I_{i}) -
\avdanan_{B_{g-i+1}}(I_{i-1}) \ge -\frac{b}{|I_i|}$

\end{corollary}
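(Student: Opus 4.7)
The plan is to prove Corollary~\ref{coro:danger-change} as a direct bookkeeping step that glues \lemref{lem:makermindeg} (governing Maker's half-move) and \lemref{lem:breakermindeg} (governing Breaker's half-move) together into a single full round. Fix $i$ with $1\le i\le g-1$, and insert the intermediate state $M_{g-i}$ (the state directly before Maker's $(g-i)$-th move, i.e.\ directly after Breaker's $(g-i)$-th move). Then telescope:
\[
  \avdanan_{B_{g-i}}(I_i) - \avdanan_{B_{g-i+1}}(I_{i-1})
  = \bigl(\avdanan_{B_{g-i}}(I_i) - \avdanan_{M_{g-i}}(I_i)\bigr)
  + \bigl(\avdanan_{M_{g-i}}(I_i) - \avdanan_{B_{g-i+1}}(I_{i-1})\bigr).
\]

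By \lemref{lem:breakermindeg} applied to $I_i$, the first bracket is at least $-b/|I_i|$, since Breaker's $(g-i)$-th move increases $\sum_{F\in I_i} w_B(F)$ by at most $b$ and leaves $\sum_{F\in I_i} w_M(F)$ unchanged. For the second bracket I split into the two cases stated by the corollary. In case $(i)$, $I_i=I_{i-1}$ means $F_{g-i}\in I_{i-1}$, so \lemref{lem:makermindeg}$(ii)$ gives that the second bracket is at least $b/|I_i|$; adding the two contributions the $\pm b/|I_i|$ terms cancel and the total change is at least $0$. In case $(ii)$, $I_i\neq I_{i-1}$ means $F_{g-i}$ is a new box, so \lemref{lem:makermindeg}$(i)$ gives that the second bracket is at least $0$; combining with the Breaker bound the total change is at least $-b/|I_i|$, as required.

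There is essentially no obstacle beyond keeping careful track of which game state ($B_{g-i+1}$, $M_{g-i}$, or $B_{g-i}$) each averaged danger is measured in, and of observing that the set $I_i=\{F_{g-i},\ldots,F_g\}$ is defined purely from the (already fixed) sequence of Maker's moves up to time $g$, hence is the same collection of boxes in all three states under comparison. Everything else is a one-line combination of \lemref{lem:makermindeg} and \lemref{lem:breakermindeg}, after which one would iterate the corollary over $i=1,\ldots,g-1$, telescope, and use the harmonic-type bound $\sum_{i}\frac{1}{|I_i|}\le \ln n + 1$ together with $\danger_{B_g}(F_g)>b\ln n$ to derive the contradiction that completes the proof of \thmref{thm:MinBoxGame}.
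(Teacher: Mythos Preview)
Your proof is correct and is exactly the approach the paper takes: the paper simply states that the corollary is obtained by combining \lemref{lem:makermindeg} and \lemref{lem:breakermindeg}, and your telescoping through the intermediate state $M_{g-i}$ is precisely how that combination works. Your extra paragraph outlining the subsequent telescoping over $i$ and the harmonic bound is also accurate, though it belongs to the proof of \thmref{thm:MinBoxGame} rather than to the corollary itself.
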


Next, we prove that before Breaker's first move,
$\avdanan_{B_1}(I_{g-1})>0$, thus obtaining a contradiction. To that
end, let $|I_{g-1}|=r$ and let $i_1 < \ldots < i_{r-1}$ be those indices
for which $I_{i_j}\neq I_{i_j-1}$. Note that $|I_{i_j}|=j+1$. Recall
that $\danger_{B_g}(F_g)>b\ln n$, therefore
\begin{eqnarray}\label{eq:lastlinehugecalc}
\avdanan_{B_1}(I_{g-1}) & = & \avdanan_{B_g}(I_{0}) +
\sum_{i=1}^{g-1} \left( \avdanan_{B_{g-i}}(I_{i}) -
\avdanan_{B_{g-i+1}}(I_{i-1}) \right)
\nonumber \\
& \ge & \avdanan_{B_g}(I_{0}) + \sum_{j=1}^{r-1} \left(
\avdanan_{B_{g-i_j}}(I_{i_j}) - \avdanan_{B_{g-i_j+1}}(I_{i_j-1})
\right) \enspace\enspace \mbox{[by
\corref{coro:danger-change}$(i)$]}
\nonumber \\
& \ge & \avdanan_{B_g}(I_{0}) - \sum_{j=1}^{r-1} \frac{b}{j+1}
\enspace\enspace \mbox{[by
\corref{coro:danger-change}$(ii)$]}
\nonumber \\
& \ge & \avdanan_{B_g}(I_{0}) - b\ln n \nonumber > 0,
\end{eqnarray}
and this contradiction establishes the upper bound for the danger
values of active boxes.

Lastly, consider a $MinBox(n,D,\alpha, b)$ game where $\alpha <
\frac{1}{b+1}$ and $D\ge \frac{b(\ln n + 1)}{1 -\alpha(b+1)}$. We
will prove that $S$ is a winning strategy for Maker in this setting.
With this in mind, it suffices to show that there are no active
boxes left at the very end of the game. Suppose not, and let $F$ be
a box which remained active, i.e., $w_M(F) < \alpha |F|$. Clearly
$F$ is not free, since the game has ended. Thus we have $w_M(F) +
w_B(F) = |F|$. Moreover, since Maker played according to $S$, we
must have $\danger(F) \le b(\ln n + 1)$. Hence
\[
  b(\ln n + 1) \ge \danger(F) = w_B(F) - b\cdot w_M(F) = |F| - (b+1) w_M(F)
  > (1 - \alpha (b+1)) |F|.
\]
This implies that $D \le |F| < \frac{b(\ln n + 1)}{1-\alpha(b+1)}$,
which is a contradiction, thereby proving that Maker is the winner,
and concluding the proof of the theorem.

\end{proof}

We turn to prove the variant of P\'osa's lemma for $e$-boosters.

\begin{proof}[Proof of \lemref{lem:boosters}]
Let $D$ be a connected graph for which $|N_{D}(X)\setminus X|\ge
2|X|+2$ holds for every subset $X\subseteq V(D)$ of size $|X|\le
k$. Let $e\in \binom{V(D)}{2}$ be a pair such that the graph
$D\cup\{e\}$ does not contain a Hamilton cycle which uses $e$. We
will prove that the number of $e$-boosters for $D$ is at least
$(k+1)^2/2$.

The idea behind the proof is fairly natural and is based on P\'osa's
\emph{rotation-extension} technique. Let $P=x_0x_1\ldots x_h$ be a
path in $D\cup \{e\}$, starting at a fixed endpoint $x_0$. Suppose $P$
contains $e$, say $e = x_ix_{i+1}$ for some $0 \le i < h$. If $D$
contains an edge $x_jx_h$ for some $0 \le j < h - 1$ such that $j
\ne i$, then one can obtain a new path $P'$ of the same length as
$P$ which contains $e$. The new path is $P'=x_0x_1\ldots
x_jx_hx_{h-1}\ldots x_{j+1}$, obtained by adding the edge $x_jx_h$
and deleting $x_jx_{j+1}$. This operation is called an
\emph{elementary rotation} at $x_j$ with a fixed endpoint $x_0$. We can apply other elementary rotations repeatedly, and if after
a number of rotations, an endpoint $x$ of the obtained path $Q$ is
connected by an edge to a vertex $y$ outside $Q$, then $Q$ can be
extended by adding the edge $xy$.

The power of the rotation-extension technique of P\'osa hinges on
the following fact. Let $P=x_0\ldots x_h$ be a longest path in
$D\cup \{e\}$ containing $e$. Let $\mc P$ be the set of all
paths obtainable from $P$ by a sequence of elementary rotations with
fixed $x_0$. Denote by $R$ the set of the other endpoints (not
$x_0$) of paths in $\mc P$, and by $R^-$ and $R^+$ the sets of
vertices immediately preceding and following the vertices of $R$
along $P$, respectively. We claim that:
\begin{claim}
  \label{eqn:rotation_endpoints}
  $N_D(R)\setminus R \subseteq R^- \cup R^+ \cup e.$
\end{claim}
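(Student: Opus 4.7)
The plan is to mimic the classical P\'osa rotation argument (as in Lemma 4 of \cite{FK}), with the single modification that rotations destroying the protected edge $e$ are forbidden. Suppose for contradiction that there exists a vertex $y\in N_D(R)\setminus(R\cup R^-\cup R^+\cup e)$, pick $u\in R$ with $yu\in E(D)$, and let $P_u=z_0z_1\dots z_h\in\mc P$ be a rotation-derived path with $z_0=x_0$ and $z_h=u$. Since $P_u$ has the same length as $P$ and also contains $e$, the maximality of $P$ forces $y\in V(P_u)=V(P)$: otherwise the edge $uy$ would extend $P_u$ to a strictly longer path through $e$, a contradiction. Hence $y=z_k$ for some $0\le k<h$, and we may assume $k\ge 1$ (the corner case $y=x_0$ is handled by the same argument after noting that $x_0$ is fixed across all paths in $\mc P$).

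Next, attempt the elementary rotation of $P_u$ using the chord $yu=z_kz_h$: this operation deletes the edge $z_kz_{k+1}$ and produces a new path of the same length with endpoint $z_{k+1}$. If $z_kz_{k+1}=e$, then $y=z_k$ is one of the two endpoints of $e$, giving $y\in e$, a contradiction. Otherwise the rotation is legal in $\mc P$ (it preserves $e$), the resulting path lies in $\mc P$, and so $z_{k+1}\in R$; in particular, $y$ is the immediate $P_u$-predecessor of an element of $R$. This is precisely the point at which the extra ``$\cup\,e$'' in the conclusion appears, and it is the only deviation from the classical P\'osa argument.

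The main obstacle is the final step: converting the local statement ``$y$ precedes an $R$-vertex along $P_u$'' into the global statement ``$y\in R^-\cup R^+$ along the original path $P$''. This is the combinatorial heart of P\'osa's lemma and is carried out by induction on the length of the rotation sequence from $P$ to $P_u$: an elementary rotation leaves an initial prefix of the path unchanged and reverses the remaining suffix, so the $P_u$-adjacency at the pivot vertex translates, in a controlled way, into a $P$-adjacency at some vertex that entered $R$ at an earlier stage of the rotation process (with $x_h\in R$ serving as the base case). Chasing $y$ and $z_{k+1}$ back through this sequence then yields $y\in R^-\cup R^+$, contradicting the initial choice of $y$ and proving the claim.
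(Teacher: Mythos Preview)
Your proposal is correct and follows the same P\'osa rotation argument as the paper, with the identical modification for the protected edge $e$. The only difference is in the last step: where you invoke an induction on the length of the rotation sequence to pass from ``$y$ is a $P_u$-neighbor of $z_{k+1}\in R$'' to ``$y\in R^-\cup R^+$'', the paper argues this directly by observing that any vertex $v\notin R\cup R^-\cup R^+$ keeps both of its $P$-neighbors in every path of $\mc P$ (since a rotation that first removed the edge between $v$ and one of its $P$-neighbors would, at that very moment, make either $v$ or that neighbor the new endpoint and hence a member of $R$); applying this to $y$ shows that $z_{k+1}$ is already a $P$-neighbor of $y$, yielding $y\in R^-\cup R^+$ immediately. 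Your description of the induction (``leaves an initial prefix unchanged and reverses the remaining suffix'') is only literally accurate for a single rotation, so the paper's invariance formulation is the cleaner way to state what you are using.
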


\begin{proof}[Proof of \clref{eqn:rotation_endpoints}.] Fix $u\in R$,
let $v\in V(D) \setminus (R \cup R^- \cup R^+\cup e)$, and consider
a path $Q\in \mc P$ ending at $u$. If $v\in V(D)\setminus
V(P)$, then $uv\not\in E(D)$, as otherwise the path $Q$ can be
extended by adding $v$, thus contradicting our assumption that $P$
is a longest path in $D\cup \{e\}$ containing $e$.  Suppose now that
$v\in V(P) \setminus (R\cup R^- \cup R^+\cup e)$. Then $v$ has the
same two neighbors in every path in $\mc P$, because an
elementary rotation that removed one of its neighbors along $P$
would, at the same time, put either this neighbor or $v$ itself in
$R$ (in the former case $v\in R^-\cup R^+$). Then if $u$ and $v$ are
adjacent, an elementary rotation at $v$ can be applied to $Q$ (since
$v\not \in e$), and produces a path in $\mc P$ whose endpoint
is a neighbor of $v$ along $P$, a contradiction. Therefore in both
cases $u$ and $v$ are non-adjacent, thereby proving
\clref{eqn:rotation_endpoints}.
\end{proof}

Equipped with \clref{eqn:rotation_endpoints} we turn back to the
proof of the lemma. Again, let $P=x_0x_1\ldots x_h$ be a longest
path in $D\cup \{e\}$ containing $e$, and let $R$, $R^-$, $R^+$ be
as in \clref{eqn:rotation_endpoints}. Note that $|R^-| \le |R|$
and $|R^+| \le |R|-1$, since $x_h\in R$ has no following vertex on
$P$, and thus does not contribute an element to $R^+$. According to
\clref{eqn:rotation_endpoints}, we have
\[
  |N_D(R)\setminus R| \le |R^-\cup R^+\cup e| \le 2|R| + 1,
\]
and it follows that $|R| > k$. We claim that, for each $v\in R$, the
pair $x_0v$ is an $e$-booster for $D$. To prove this claim, fix
$v\in R$, and let $Q\in \mc P$ be a path ending at $v$. Note
that by adding $x_0v$ to $Q$, we turn $Q$ into a cycle $C$
containing $e$. This cycle is either Hamiltonian or $V(Q)\ne V(D)$.
The former case would immediately imply that $x_0v$ is an
$e$-booster for $D$. Thus we may assume that $V(C)=V(Q)\ne V(D)$.
Since $D$ is connected, there exists an edge $yz \in E(D)$
connecting $y\in V(C)$ to $z\not\in V(C)$. We can use the edge $yz$
to obtain a path $P'$ that contains $e$ of length $h + 1$ in the
following way. In $C$ there are two edges incident to $y$, and at
least one of them is not $e$. By removing that edge from $C$ and
adding the edge $yz$, we obtain such path $P'$ of length $h+1$. On
the other hand, because we assumed that $P$ was the longest path in
$D\cup \{e\}$ containing $e$, we must conclude that $x_0v$ is an
$e$-booster for $D$, thereby proving our claim.

To finish the proof of the lemma, fix a subset
$\{y_1,\ldots,y_{k+1}\}$ of $R$. For every $y_i$, there exists a
path $P_i$ ending at $y_i$, that can be obtained from $P$ by a
sequence of elementary rotations. Now fix $y_i$ as the starting
point of $P_i$ and let $Y_i$ be the set of other endpoints of all
paths obtained from $P_i$ by a sequence of elementary rotations with
fixed $y_i$. As before, $|Y_i| \ge k+1$, and all edges connecting
$y_i$ to a vertex in $Y_i$ are $e$-boosters for $D$. Altogether we
have found $(k+1)^2$ pairs $y_iz_{ij}$ for $z_{ij}\in Y_i$. As every
booster is counted at most twice, the conclusion of the lemma
follows.
\end{proof}

\end{document}